\makeatletter\@addtoreset {equation}{section}\makeatother
\newtheorem{theorem}{Theorem}
\newtheorem{proposition}{Proposition}[section]
\newtheorem{conjecture}{Conjecture}
\newtheorem{lemma}{Lemma}[section]
\newtheorem{remark}{Remark}[section]
\newtheorem{definition}{Definition}[section]
{\begin{trivlist} \item[]{\em Proof }}%
{\hspace*{\fill}$\rule{.3\baselineskip}{.35\baselineskip}$\end{trivlist}}
\DeclareMathOperator{\csch}{csch}
\DeclareMathOperator{\R}{\mathbb{R}}
\begin{document} 
	
\title[Green's function for the fractional KdV equations]{Green's function for the fractional KdV equation on the periodic domain via Mittag--Leffler's function}

\author{Uyen Le}
\address[U. Le]{Department of Mathematics and Statistics, McMaster University,
	Hamilton, Ontario, Canada, L8S 4K1}
\email{leu@mcmaster.ca}

\author{Dmitry E. Pelinovsky}
\address[D. Pelinovsky]{Department of Mathematics and Statistics, McMaster University,
	Hamilton, Ontario, Canada, L8S 4K1}
\email{dmpeli@math.mcmaster.ca}


\keywords{Fractional Laplacian, Green's function, positivity and monotonicity, periodic domain}


\maketitle

\begin{abstract}
	The linear operator $c + (-\Delta)^{\alpha/2}$, where $c > 0$ and 
	$(-\Delta)^{\alpha/2}$ is the fractional Laplacian on the periodic domain,  arises in the existence of periodic travelling waves in the fractional Korteweg--de Vries equation. We establish a relation of the Green's function of this linear operator with the Mittag--Leffler function, which was previously used in the context of Riemann--Liouville's and Caputo's fractional derivatives. By using this relation, we prove that Green's function is strictly positive and single-lobe (monotonically decreasing away from the maximum point) for every $c > 0$ and every $\alpha \in (0,2]$. On the other hand, we argue from numerical approximations that in the case of $\alpha \in (2,4]$, the Green's function is positive and single-lobe for small $c$ 
	and non-positive and non-single lobe for large $c$. 
\end{abstract}


\section{Introduction}	

This work deals with Green's function for the linear operator 
\begin{equation}
\label{linear-operator}
L_{c,\alpha} := c + (-\Delta)^{\alpha/2}, 
\end{equation}
where $c>0$ is arbitrary parameter and $(-\Delta)^{\alpha/2}$, $\alpha > 0$ is the fractional Laplacian on the normalized periodic domain $\mathbb{T} = [-\pi, \pi]$. The fractional Laplacian is defined via Fourier series by 
\begin{equation}
f(x) = \sum_{n \in \mathbb{Z}} f_n e^{inx}, \quad 
(-\Delta)^{\alpha/2} f(x) = \sum_{n \in \mathbb{Z}} |n|^{\alpha} f_n e^{inx}.
\end{equation}
Properties of the fractional Laplacian on the $d$-dimensional torus $\mathbb{T}^d$ were studied in \cite{RS16}. Recent review of boundary-value problems for the fractional Laplacian and related applications can be found in \cite{JCP}.

Green's function denoted by $G_{\mathbb{T}}$ satisfies the periodic boundary value problem
\begin{equation}
\label{green}
\left[ c + (-\Delta)^{\alpha/2} \right] G_{\mathbb{T}}(x)= \delta(x), \hspace{0.2in}x\in \mathbb{T},
\end{equation}
where $\delta$ is the Dirac delta distribution. The solution is represented via Fourier series by
\begin{equation}\label{green-fourier}
G_{\mathbb{T}}(x) = 
\frac{1}{2\pi}\sum_{n\in \mathbb{Z}}\frac{\cos(nx)}{c+|n|^\alpha}= \frac{1}{2\pi}\left(\frac{1}{c}+ 2\sum_{n=1}^{\infty}\frac{\cos(nx)}{c+n^\alpha}\right).
\end{equation}

Green's function $G_{\mathbb{T}}$ defined by (\ref{green}) and (\ref{green-fourier})  arises in the study of the stationary equation 
\begin{equation}
\label{ode-stat}
\left[ c + (-\Delta)^{\alpha/2} \right] \psi(x) = \psi(x)^{1+p}, \quad x \in \mathbb{T},
\end{equation}
where $p \in \mathbb{N}$. The stationary equation (\ref{ode-stat}) 
defines the travelling periodic waves of the fractional Korteweg--de Vries (fKdV) equation with the speed $c$ \cite{C,BC,hur,J,NPU1,NPU2} 
and the standing periodic waves of the fractional nonlinear Schr\"{o}dinger (fNLS) equation with the frequency $c$ \cite{Johnson,stefanov}. Periodic solutions in other nonlinear elliptic equations associated with the fractional Laplacian were also considered, e.g., in \cite{Ambr,DuGui}.

Green's function $G_{\mathbb{T}}$ defined by (\ref{green}) and (\ref{green-fourier})
was used in the proof of strict positivity of the periodic 
solutions of the stationary equation (\ref{ode-stat}) for $c > 0$, 
$\alpha \in (0,2]$, and $p = 1$ by using Krasnoselskii's fixed point theorem (see Theorem 2.2 in \cite{Peli}). The important ingredient of the proof is the property of strict positivity of Green's function $G_{\mathbb{T}}$ for every $c > 0$.

The property of strict positivity of Green's function was proven for different boundary-value problems associated with the fractional operators in \cite{Nieto} for $\alpha \in (0,1)$ and in \cite{Bai} for $\alpha \in (1,2)$; however, the fractional derivatives were considered in the Riemann--Liouville sense (see \cite{Kilbas,Podlubny} for review of fractional derivatives). 

Here we prove strict positivity 
of Green's function $G_{\mathbb{T}}$ satisfying the boundary-value problem 
(\ref{green}) on $\mathbb{T}$ for every $c > 0$ and every $\alpha \in (0,2]$.
Moreover, we show that $G_{\mathbb{T}}$ 
has the single-lobe profile in the sense that 
$G_{\mathbb{T}}$ is monotonically decreasing on $\mathbb{T}$ away from its maximum point located at $x = 0$. The following theorem presents this result.

\begin{theorem}
	\label{main}
For every $c > 0$ and every $\alpha \in (0,2]$, Green's function $G_{\mathbb{T}}$ defined by (\ref{green}) and (\ref{green-fourier}) is 
even, strictly positive on $\mathbb{T}$, and monotonically decreasing on $(0,\pi)$. 
\end{theorem}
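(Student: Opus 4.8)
The plan is to convert the Fourier series (\ref{green-fourier}) into a single integral against a manifestly positive kernel. Evenness is immediate, since every term $\cos(nx)$ is even, so the content lies entirely in positivity and monotonicity. First I would record the Laplace-transform identity for the two-parameter Mittag--Leffler function $E_{\alpha,\alpha}$,
$$\frac{1}{c+n^{\alpha}} = \int_{0}^{\infty} e^{-nt}\, t^{\alpha-1} E_{\alpha,\alpha}(-c\,t^{\alpha})\, dt, \qquad n \geq 1,$$
observing that the same formula at $n=0$ reproduces the mean term $c^{-1}$. Substituting this into (\ref{green-fourier}), interchanging the sum and the integral (justified after Abel regularization by the decay $e^{-nt}$), and summing the resulting geometric series, I would arrive at
$$G_{\mathbb{T}}(x) = \frac{1}{2\pi}\int_{0}^{\infty} t^{\alpha-1} E_{\alpha,\alpha}(-c\,t^{\alpha})\, P(t,x)\, dt, \qquad P(t,x) := \frac{1-e^{-2t}}{1-2e^{-t}\cos x + e^{-2t}},$$
where $P$ is the Poisson kernel for the disk.

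The geometric work is then done by two elementary properties of $P$. A direct differentiation gives $P(t,x) > 0$ for all $t>0$ and $x\in\mathbb{T}$, and $\partial_x P(t,x) = -(1-e^{-2t})\,\dfrac{2e^{-t}\sin x}{(1-2e^{-t}\cos x + e^{-2t})^{2}} < 0$ for $x \in (0,\pi)$. Consequently, if the scalar weight $w(t) := t^{\alpha-1} E_{\alpha,\alpha}(-c\,t^{\alpha})$ is nonnegative and not identically zero, then $G_{\mathbb{T}}(x) > 0$ and $G_{\mathbb{T}}'(x) < 0$ on $(0,\pi)$ follow by differentiating under the integral sign. The theorem therefore reduces to the sign of $w$. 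For $\alpha \in (0,1]$ this is precisely where the Mittag--Leffler machinery pays off: by Schneider's complete-monotonicity theorem, $x \mapsto E_{\alpha,\alpha}(-x)$ is completely monotone for $0 < \alpha \le 1$, hence nonnegative, and since $t^{\alpha-1} \ge 0$ we obtain $w \ge 0$ on $(0,\infty)$, closing this case.

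The main obstacle is the range $\alpha \in (1,2]$, where $w$ genuinely changes sign. Indeed, the asymptotic expansion of the Mittag--Leffler function yields $w(t) \sim -c^{-2}\,t^{-\alpha-1}/\Gamma(-\alpha)$ as $t\to\infty$, which is negative because $\Gamma(-\alpha) > 0$ for $\alpha\in(1,2)$; thus pointwise positivity of the weight fails and the representation above cannot be used verbatim. To treat this range I would subordinate to the Gaussian case: using $\frac{1}{c+n^\alpha} = \int_0^\infty e^{-ct} e^{-n^\alpha t}\,dt$ together with $e^{-n^{\alpha}t} = \int_{0}^{\infty} e^{-n^{2}\tau} f_{\alpha/2}(\tau,t)\, d\tau$, where $f_{\alpha/2}(\cdot,t) \ge 0$ is the one-sided $(\alpha/2)$-stable density (nonnegative precisely because $\alpha/2 \le 1$), I would rewrite $G_{\mathbb{T}}$ as a positive superposition of the classical heat kernel $\sum_{n\in\mathbb{Z}} e^{-n^{2}\tau}\cos(nx)$ on $\mathbb{T}$, i.e. a Jacobi theta function, whose strict positivity and monotone decrease on $(0,\pi)$ are classical.

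I expect establishing the sign behaviour of $w$ across the full interval $(0,2]$ — and reconciling the clean $\alpha \le 1$ argument with the sign-changing $\alpha \in (1,2]$ regime — to be the delicate step. One could instead try to remain entirely within the Mittag--Leffler framework and argue that the integral against the (variation-diminishing) Poisson kernel stays positive despite the sign change of $w$, but I anticipate the subordination reduction to be substantially cleaner. It is also this sign change that makes $\alpha = 2$ the natural endpoint, beyond which, as the numerics reported in the paper indicate, positivity of $G_{\mathbb{T}}$ is lost.
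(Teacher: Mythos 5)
Your proposal is correct, and for the decisive range $\alpha\in(1,2]$ it takes a genuinely different route from the paper. The first half coincides with the paper's argument: your Laplace-transform identity and Poisson-kernel representation are equivalent to Proposition \ref{prop-1} (the paper's kernel $\frac{e^t\cos x-1}{1-2e^t\cos x+e^{2t}}$ is $\frac{1}{2}\bigl(P(t,x)-1\bigr)$), and your treatment of $\alpha\in(0,1]$ via complete monotonicity of $E_{\alpha,\alpha}(-x)$ is exactly Proposition \ref{prop-4}. You also correctly identify the obstruction for $\alpha\in(1,2)$ --- the weight $t^{\alpha-1}E_{\alpha,\alpha}(-ct^{\alpha})$ turns negative for large $t$ --- which is precisely why the paper abandons the pointwise-sign argument there. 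The divergence is in how this is overcome. The paper stays within the periodic/Mittag--Leffler framework: it proves monotonicity for $\alpha\in(1,2)$ variationally, realizing $G_{\mathbb{T}}$ as the unique Lax--Milgram minimizer of $B_c(u)-u(0)$ and invoking the fractional Polya--Szeg\"o inequality (Lemma \ref{Polya}) to force the minimizer to equal its symmetric decreasing rearrangement; strict positivity is then obtained separately from the explicit formula \eqref{G-pi} for $G_{\mathbb{T}}(\pi)$, derived by inserting the integral representation of $E_{\alpha}$ and a Prudnikov $\csch$ integral. You instead subordinate to the heat semigroup: since $\lambda\mapsto e^{-t\lambda^{\alpha/2}}$ is completely monotone for $\alpha\le 2$, $G_{\mathbb{T}}$ becomes a nonnegative superposition of periodic heat kernels (Jacobi theta functions), whose positivity and strict decrease on $(0,\pi)$ follow from the Jacobi triple product. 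This is essentially the Frank--Lenzmann route \cite{F-L} that the paper cites in the introduction (periodization of $G_{\mathbb{R}}$ plus heat-kernel properties), transplanted directly to $\mathbb{T}$; it is shorter, it delivers positivity and monotonicity in one stroke, and it in fact covers all of $\alpha\in(0,2]$ at once, rendering your Mittag--Leffler case $\alpha\le 1$ redundant. What the paper's longer route buys in exchange is its advertised novelty: explicit Mittag--Leffler integral formulas such as \eqref{G-pi} and a representation that continues (for small $c$) into the regime $\alpha>2$, where your subordination argument is unavailable because $\alpha/2>1$ destroys the nonnegativity of the stable density. The only loose ends in your outline are minor: the monotonicity of the theta function on $(0,\pi)$ should be justified (the triple product does it), and the $n=0$ term of your Laplace identity is only Abel-summable at the endpoint $\alpha=2$, which is harmless since that case also has the closed form of Appendix \ref{appendix-a}.
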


The result of Theorem \ref{main} is known in the context of 
Green's function $G_{\mathbb{R}}$ for the linear operator $\mathcal{L}_{c,\alpha}$ in  (\ref{linear-operator}) considered on the real line $\mathbb{R}$ (see Lemma A.4 in \cite{F-L}). This was shown 
from similar properties of the heat kernel related to 
the fractional Laplacian $(-\Delta)^{\alpha/2}$ (see Lemma A.1 in \cite{F-L}). The constant $c > 0$ in $\mathcal{L}_{c,\alpha}$ can be normalized to unity 
when $\mathcal{L}_{c,\alpha}$ is considered on the real line $\mathbb{R}$. 

The same properties hold for Green's function $G_{\mathbb{T}}$ 
on the periodic domain $\mathbb{T}$ because it can be written 
as the following periodic superposition of Green's function $G_{\mathbb{R}}$ 
on the real line $\mathbb{R}$:
\begin{equation}
G_{\mathbb{T}}(x) = \sum_{n \in \mathbb{Z}} G_{\mathbb{R}}(x-2\pi n), \quad x \in \mathbb{T}.
\end{equation}
Hence, if $G_{\mathbb{R}}(x) > 0$ for $x \in \mathbb{R}$, then 
$G_{\mathbb{T}}(x) > 0$ for $x \in \mathbb{T}$ and if 
$G_{\mathbb{R}}'(x) \leq 0$ for $x \geq 0$, then $G_{\mathbb{T}}'(x) \leq 0$ 
for $x \in [0,\pi]$. Here the parameter $c$ in $G_{\mathbb{T}}$ cannot 
be normalized to unity. 

The main novelty of our work is the relation between Green's function $G_{\mathbb{T}}$ and the Mittag--Leffler function \cite{M-L}. 
The Mittag--Leffler function naturally arises in the other (Riemann--Liouville and Caputo) formulations 
of fractional derivatives \cite{Kilbas,Podlubny} 
but it has not been used in the context of the fractional 
Laplacian to the best of our knowledge. In particular, 
we prove Theorem \ref{main} for $\alpha \in (0,2)$ by using the integral representations and properties of the Mittag--Leffler function 
and some trigonometric series from \cite{Prudnikov}. For $\alpha = 2$, 
the result of Theorem \ref{main} can be readily shown by writing $G_{\mathbb{T}}$ in the exact analytical form (see Appendix \ref{appendix-a}).

Figure \ref{profile} illustrates the statement of Theorem \ref{main}. 
It shows the single-lobe positive profile of $G_{\mathbb{T}}$ for two values of $c$ in the case $\alpha = 0.5$ (left) and $\alpha = 1.5$ (right). The only  difference between these two cases is that $G_{\mathbb{T}}(0)$ is bounded for $\alpha > 1$ and is unbounded for $\alpha \leq 1$.

\begin{figure}[h]
	\centering
	\includegraphics[width=0.49\linewidth]{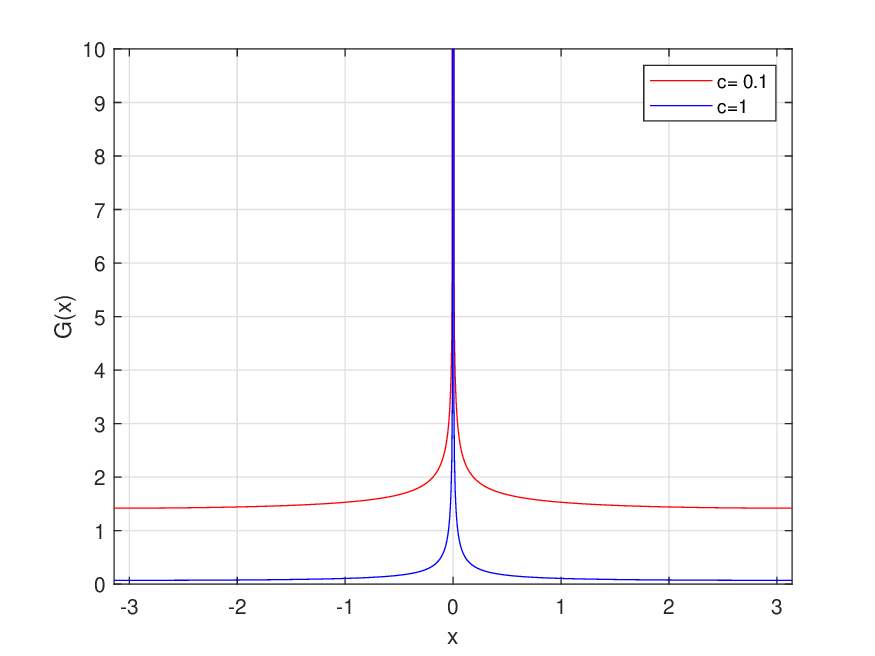}
	\includegraphics[width=0.49\linewidth]{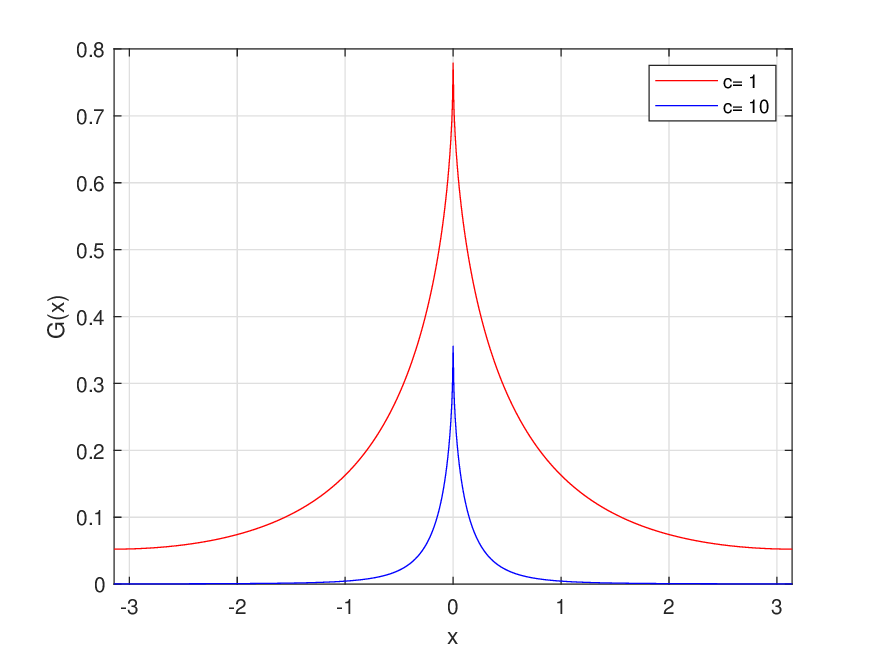}
	\caption{Profiles of $G_{\mathbb{T}}$ for $\alpha =0.5$ (left) and $\alpha = 1.5$ (right) for specific values of $c$.}
	\label{profile}
\end{figure}

Green's function $G_{\mathbb{R}}$ on the real line $\mathbb{R}$ is also used to understand interactions of strongly localized waves, e.g. attractive versus repelling interactions \cite{GO,Manton} (see also \cite{CP,Parker}). These interactions were recently studied in \cite{Dem1,Dem2} in the context of the beam equation, which corresponds to the case $\alpha = 4$. The fractional cases for $\alpha \in (0,2)$ and $\alpha \in (2,4)$ are also important from applications in quantum computing, fluid dynamics, 
and elasticity theory.

Theorem \ref{main} shows that the properties of $G_{\mathbb{T}}$ for $\alpha \in (0,2)$ are similar to those for $\alpha = 2$ (the same holds for $G_{\mathbb{R}}$). However, it is an open question if the properties of 
$G_{\mathbb{T}}$ for $\alpha \in (2,4)$ are similar to those for $\alpha = 4$, 
for which $G_{\mathbb{R}}$ has infinitely many oscillations, whereas the number of oscillations of $G_{\mathbb{T}}$ depends on $c > 0$ and becomes infinite in the limit of $c \to \infty$ (see Appendix B). In the second part of this paper, 
we present numerical results which support the following conjecture.

\begin{conjecture}\label{conj-main}
For each $\alpha \in (2,4]$, there exists $c_0 > 0$ such that 
for $c \in (0,c_0)$, Green's function $G_{\mathbb{T}}$ defined by (\ref{green}) and (\ref{green-fourier}) is even, strictly positive on $\mathbb{T}$, and monotonically decreasing on $(0,\pi)$. For $c \in [c_0,\infty)$, $G_{\mathbb{T}}$ has a finite number of zeros on $\mathbb{T}$. The number of zeros is bounded in the limit of $c \to \infty$ if $\alpha \in (2,4)$ and unbounded as $c \to \infty$ if $\alpha = 4$.
\end{conjecture}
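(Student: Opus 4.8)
The plan is to reduce all three assertions to the sign and zero structure of the real-line Green's function and the way the relevant length scale depends on $c$, using the scaling and periodization already recorded in the excerpt. The key observation is that, on $\mathbb{R}$, the substitution $k = c^{1/\alpha}\xi$ in $G_{\mathbb{R}}(x) = \frac{1}{2\pi}\int_{\mathbb{R}} \frac{e^{ikx}}{c+|k|^\alpha}\,dk$ gives the exact self-similar form
\[
G_{\mathbb{R}}(x) = c^{1/\alpha - 1}\, g_\alpha\!\left(c^{1/\alpha} x\right), \qquad g_\alpha(y) := \frac{1}{2\pi}\int_{\mathbb{R}} \frac{e^{iy\xi}}{1+|\xi|^\alpha}\,d\xi,
\]
so that $G_{\mathbb{T}}(x) = c^{1/\alpha-1}\sum_{n\in\mathbb{Z}} g_\alpha\!\left(c^{1/\alpha}(x-2\pi n)\right)$. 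The parameter $c$ enters only through the window half-width $c^{1/\alpha}\pi$ in the argument of the fixed profile $g_\alpha$. Thus the whole conjecture becomes a statement about which oscillations of $g_\alpha$ are visible in the window $[-c^{1/\alpha}\pi, c^{1/\alpha}\pi]$, and the threshold $c_0$ should be the value at which the first zero of $g_\alpha$ enters that window, $c_0 \approx (y_1/\pi)^\alpha$ where $y_1$ denotes the first positive zero of $g_\alpha$.

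For the small-$c$ regime (the first assertion) I would argue directly from the Fourier series, since the scaling picture is not needed there. Evenness is immediate. For positivity, write $2\pi G_{\mathbb{T}}(x) = \tfrac{1}{c} + 2\sum_{n\geq 1}\frac{\cos nx}{c+n^\alpha}$; the second term is bounded in absolute value by $2\sum_{n\geq 1} n^{-\alpha}$ for $\alpha > 1$, so the $\tfrac{1}{c}$ term dominates and $G_{\mathbb{T}}>0$ once $c$ is small. Monotonicity on $(0,\pi)$ is equivalent to $S_c(x):=\sum_{n\geq 1}\frac{n\sin nx}{c+n^\alpha}\geq 0$. At $c=0$ this is $S_0(x)=\sum_{n\geq 1}\frac{\sin nx}{n^{\alpha-1}}$, which I would prove strictly positive on $(0,\pi)$ for every $\alpha>1$ using the Laplace representation $n^{-(\alpha-1)} = \frac{1}{\Gamma(\alpha-1)}\int_0^\infty t^{\alpha-2}e^{-nt}\,dt$ together with the closed form $\sum_{n\geq 1}\sin(nx)e^{-nt} = \frac{\sin x}{2(\cosh t - \cos x)}>0$ — exactly the Mittag–Leffler/Laplace thread of the paper. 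A uniform estimate $|S_c(x)-S_0(x)|\leq C_\alpha\, c$ (from $S_c-S_0 = -c\sum_{n\geq 1}\frac{\sin nx}{n^{\alpha-1}(c+n^\alpha)}$, convergent for $\alpha>1$) together with the endpoint expansions of $S_0$ near $0$ and $\pi$ then upgrades positivity of $S_0$ to positivity of $S_c$ for all sufficiently small $c$.

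The crux is the zero structure of $g_\alpha$. Rotating the contour of $\frac{1}{\pi}\int_0^\infty \frac{e^{iy\xi}}{1+\xi^\alpha}\,d\xi$ onto the positive imaginary axis, one finds for $\alpha\in(2,4]$ exactly one pole $\xi_0 = e^{i\pi/\alpha}$ in the open first quadrant, whose residue contributes an exponentially damped oscillation $\sim e^{-y\sin(\pi/\alpha)}\cos\!\big(y\cos(\pi/\alpha)+\phi\big)$, while the remaining branch-cut integral yields
\[
\frac{\sin(\pi\alpha/2)}{\pi}\int_0^\infty \frac{s^\alpha e^{-ys}}{1+2s^\alpha\cos(\pi\alpha/2)+s^{2\alpha}}\,ds,
\]
an algebraically decaying term $\sim y^{-\alpha-1}$ of constant sign, since the denominator $=|1+s^\alpha e^{i\pi\alpha/2}|^2>0$ for $s>0$. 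For $\alpha\in(2,4)$ the coefficient $\sin(\pi\alpha/2)\neq 0$, so this one-signed algebraic tail eventually dominates the damped oscillation: $g_\alpha$ has a positive, monotone central lobe up to $y_1$, finitely many sign changes, and is then eventually of one sign — hence finitely many zeros. For $\alpha=4$ the coefficient $\sin(2\pi)=0$, the branch contribution vanishes, and $g_4$ is purely the damped oscillation, with infinitely many zeros; the fact that $\sin(\pi\alpha/2)\to 0$ as $\alpha\to 4^-$ explains why the total zero count of $g_\alpha$ diverges in that limit.

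Granting the zero structure of $g_\alpha$, the remaining assertions follow from the window picture. For large $c$ the images $n\neq 0$ in the periodization are evaluated at arguments of size $\geq c^{1/\alpha}\pi$ where $g_\alpha$ is negligible, so $G_{\mathbb{T}}(x)\approx c^{1/\alpha-1}g_\alpha(c^{1/\alpha}x)$ and the zeros of $G_{\mathbb{T}}$ on $\mathbb{T}$ correspond to the zeros of $g_\alpha$ inside $[-c^{1/\alpha}\pi, c^{1/\alpha}\pi]$; this count saturates at the finite total for $\alpha\in(2,4)$ and grows like $c^{1/\alpha}$ for $\alpha=4$, giving the bounded versus unbounded dichotomy. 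The main obstacles, and the reasons the statement is posed as a conjecture rather than a theorem, are threefold: (i) making the non-integer branch-cut analysis quantitative enough to pin the exact finite number of zeros of $g_\alpha$ uniformly in $\alpha$; (ii) controlling the periodization tail so that torus zeros match line zeros precisely near the threshold, where $c^{1/\alpha}\pi$ is only moderate and several images may contribute comparably; and (iii) hardest of all, establishing a \emph{single} sharp $c_0$, i.e. monotonicity in $c$ of the positivity-and-monotonicity property, which requires ruling out re-entrant positive windows as $c$ increases. I expect (iii) to be the genuine bottleneck, and it is precisely the point for which the numerical evidence is offered.
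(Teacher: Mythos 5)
Since the statement under review is a conjecture, the paper itself offers no proof: its support consists of numerical root-finding for $G_{\mathbb{T}}(\pi)$ from the Fourier series (\ref{G-1}) in Section \ref{conclusion}, an exact closed-form torus solution at $\alpha=4$ that reduces the sign of $G_{\mathbb{T}}(\pi)$ to the transcendental equation (\ref{transcendental-eq}) in Appendix \ref{appendix-b}, and formal asymptotics (\ref{c-vs-alpha}) for the first root as $\alpha\to 2^+$ in Appendix \ref{appendix-c}. Your program is sound and rests on the same underlying mechanism, but organizes it genuinely differently: you reduce everything to the line profile $g_\alpha$ via scaling and periodization, and your decomposition of $g_\alpha$ into a damped oscillation (pole at $e^{i\pi/\alpha}$) plus a one-signed algebraic tail with coefficient $\sin(\pi\alpha/2)$ is exactly the real-line counterpart of the exponential-plus-algebraic Mittag--Leffler expansion (\ref{mfl-asymp-2}) (note $a_n=e^{i\pi(2n-1)/\alpha}$ there) and of the split $I=I_1+I_2$ used in Appendix \ref{appendix-c}. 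The vanishing of $\sin(\pi\alpha/2)$ precisely at $\alpha=4$ gives a clean, uniform-in-$\alpha$ explanation of the bounded/unbounded dichotomy, which the paper only infers from numerics plus the $\alpha=4$ endpoint; moreover your window picture reproduces (\ref{G-simplified}) and predicts $c_0\approx(y_1/\pi)^{4}=81/64$ at $\alpha=4$, matching the first root of (\ref{transcendental-eq}) (where $a_1\approx 3/4$ and $c=4a_1^4$). Your route also buys something the paper does not have: a proof of the small-$c$ assertion for \emph{every} $\alpha\in(2,4]$ — the positivity of $S_0$ via the kernel $\sin x/\left(2(\cosh t-\cos x)\right)$, combined with the endpoint-weighted perturbation bound (the difference $S_c-S_0$ is $\mathcal{O}(cx)$ near $0$ and $\mathcal{O}(c(\pi-x))$ near $\pi$, so the vanishing of $S_0$ at the endpoints is harmless), is correct and could be written up as a theorem, whereas the paper establishes this rigorously only at $\alpha=4$. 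What the paper's route buys instead is exactness and quantitative detail: torus-exact formulas at $\alpha=4$ rather than line asymptotics, the blow-up rate of $c_0(\alpha)$ as $\alpha\to 2^+$, and the numerically observed pairwise coalescence of roots of $G_{\mathbb{T}}(\pi)$ (Figure \ref{roots}) — a finite-$c$ effect that lives exactly in your obstacle (ii), where several periodized images are comparable, and is invisible at leading order in your window picture. You are also right that the existence of a \emph{single} sharp threshold $c_0$, your obstacle (iii), is the genuine bottleneck; the paper does not resolve it either, which is why the statement remains a conjecture.
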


Since the limit $c \to \infty$ for Green's function $G_{\mathbb{T}}$ can be rescaled as Green's funciton $G_{\mathbb{R}}$ with $c$ normalized to unity, 
Conjecture \ref{conj-main} implies the following conjecture (relevant for  interactions of strongly localized waves in \cite{Dem1,Dem2}).

\begin{conjecture}\label{conj-line}
	For every $c > 0$ and every $\alpha \in (2,4]$, Green's function $G_{\mathbb{R}}$ is not strictly positive on $\mathbb{R}$ and is not monotonically decreasing on $(0,\infty)$. It has a finite number of zeros on $\mathbb{R}$ if $\alpha \in (2,4)$ and an infinite number of zeros if $\alpha = 4$.
\end{conjecture}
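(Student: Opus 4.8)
The plan is to reduce the statement to a single normalized case and then extract the large-$x$ asymptotics of $G_{\mathbb{R}}$ from the non-smoothness of its Fourier symbol. Since $G_{\mathbb{R}}(x) = \frac{1}{2\pi}\int_{\mathbb{R}} \frac{e^{ikx}}{c+|k|^{\alpha}}\,dk$, the substitution $k = c^{1/\alpha}\kappa$ yields the scaling $G_{\mathbb{R}}(x;c) = c^{1/\alpha-1}\,G_{\mathbb{R}}(c^{1/\alpha}x;1)$, so the sign pattern and the number of zeros of $G_{\mathbb{R}}$ are independent of $c>0$. It therefore suffices to treat $c=1$ and study
\[
g_{\alpha}(x) := \frac{1}{\pi}\int_0^{\infty}\frac{\cos(kx)}{1+k^{\alpha}}\,dk = \frac{1}{2\pi}\int_{\mathbb{R}}\frac{e^{ikx}}{1+|k|^{\alpha}}\,dk .
\]

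The decisive computation is the leading large-$x$ behavior. Because $|k|^{\alpha}$ fails to be smooth at $k=0$ precisely when $\alpha$ is not an even integer, the expansion $\frac{1}{1+|k|^{\alpha}} = 1 - |k|^{\alpha} + O(|k|^{2\alpha})$ near $k=0$ feeds, through the distributional Fourier transform of $|k|^{\alpha}$, the algebraic tail
\[
g_{\alpha}(x) = \frac{\Gamma(\alpha+1)\sin(\pi\alpha/2)}{\pi}\,|x|^{-\alpha-1}\bigl(1+o(1)\bigr), \qquad x\to+\infty .
\]
The sign is governed by $\sin(\pi\alpha/2)$: positive for $\alpha\in(0,2)$ (consistent with Theorem~\ref{main}), but negative for $\alpha\in(2,4)$, since then $\pi\alpha/2\in(\pi,2\pi)$. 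I would make this rigorous by rotating the contour in $\int_0^{\infty}\frac{e^{ikx}}{1+k^{\alpha}}\,dk$ onto the positive imaginary axis: one picks up the residue at the pole $k_0=e^{i\pi/\alpha}$, which for $\alpha\in(2,4)$ lies in the first quadrant and contributes an exponentially small oscillation of order $e^{-x\sin(\pi/\alpha)}$, while the integral along $k=is$ yields, by Watson's lemma, a non-oscillating algebraic series in powers $x^{-\alpha-1},x^{-2\alpha-1},\dots$ whose leading term is the one displayed above.

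Non-positivity and non-monotonicity for $\alpha\in(2,4)$ then follow at once: $g_{\alpha}(0)=\frac{1}{\pi}\int_0^{\infty}(1+k^{\alpha})^{-1}\,dk>0$ is finite for $\alpha>1$, whereas $g_{\alpha}(x)<0$ for large $x$; hence $g_{\alpha}$ changes sign, so it is not strictly positive, and a monotonically decreasing function that becomes negative satisfies $g_\alpha(x)\le g_\alpha(x_0)<0$ for $x>x_0$, contradicting $g_\alpha(x)\to 0$, so monotonicity also fails. For finiteness of the zero set I would combine two facts. First, the contour representation exhibits $g_{\alpha}$ as a convergent residue term plus a Laplace-type integral with kernel $e^{-sx}$, both analytic in $x$ for $x>0$; hence $g_{\alpha}$ is real-analytic on $(0,\infty)$ and its zeros are isolated. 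Second, the asymptotic expansion shows the algebraic part is eventually strictly negative and dominates the exponentially small oscillatory remainder, so there is $x_{\ast}$ with $g_{\alpha}(x)<0$ for all $x>x_{\ast}$; together with $g_{\alpha}(0)>0$, the zeros lie in a compact subinterval and are finite in number.

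The case $\alpha=4$ is genuinely different because $\sin(\pi\alpha/2)=0$, so the algebraic tail degenerates (the symbol $1+k^{4}$ is smooth and its transform decays exponentially) and the asymptotics are governed entirely by the poles $e^{i\pi/4}$ and $e^{3i\pi/4}$ in the upper half-plane. These produce the damped oscillation $g_{4}(x)\sim e^{-x/\sqrt{2}}\bigl(A\cos(x/\sqrt{2})+B\sin(x/\sqrt{2})\bigr)$, equivalently the explicit Green's function of $1+\partial_x^{4}$, which has infinitely many zeros; this simultaneously gives non-positivity and non-monotonicity. The main obstacle is the rigorous remainder control for $\alpha\in(2,4)$: one must show quantitatively that the Watson-lemma tail together with the exponentially small residue contribution stays below the negative leading term $\Gamma(\alpha+1)\sin(\pi\alpha/2)\,x^{-\alpha-1}/\pi$ for all sufficiently large $x$, uniformly enough to exclude further sign changes. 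The borderline regime $\alpha\uparrow 4$, where the leading coefficient $\sin(\pi\alpha/2)\to 0$ and the transition to infinitely many zeros takes place, is where this control is most delicate.
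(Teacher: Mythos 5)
Your proposal is sound, and it takes a genuinely different---and in fact stronger---route than the paper. The paper does not prove this statement at all: it is presented as a conjecture, inferred from Conjecture \ref{conj-main} via the rescaling $c \to \infty$, and it is only settled for $\alpha = 4$, where the explicit periodic solution of Appendix \ref{appendix-b} has the leading term (\ref{G-simplified}), which is exactly the damped oscillation $e^{-ax}(\cos(ax)+\sin(ax))$ you recover from the two upper-half-plane poles of $1/(1+k^4)$. For $\alpha \in (2,4)$ the paper offers only numerics (Section \ref{conclusion}) and the formal asymptotics of Appendix \ref{appendix-c}, and its conclusion explicitly lists a rigorous proof as an open problem. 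Your argument instead works directly on the line: the scaling $G_{\mathbb{R}}(x;c) = c^{1/\alpha-1} G_{\mathbb{R}}(c^{1/\alpha}x;1)$ eliminates $c$ (the paper makes the same normalization remark), and the rotation of the contour onto the imaginary axis splits $g_\alpha$ into an exponentially small residue term plus a Laplace integral whose Watson-lemma expansion has leading term $\pi^{-1}\Gamma(\alpha+1)\sin(\pi\alpha/2)\,x^{-\alpha-1}$, which is negative precisely for $\alpha \in (2,4)$. Structurally this is the real-line analogue of the paper's split (\ref{int-split}) into exponentially small and algebraically small parts---and your rotated denominator $1+2s^{\alpha}\cos(\pi\alpha/2)+s^{2\alpha}$ is the same kernel as in the Mittag--Leffler representation (\ref{integral-repr})---but you carry it out rigorously for fixed $\alpha$ as $x \to \infty$, rather than formally in the joint limit $(c,\alpha) \to (\infty,2)$ as in Appendix \ref{appendix-c}. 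What your approach buys is an actual theorem where the paper has a conjecture; what the paper's approach buys is information about the periodic function $G_{\mathbb{T}}$ itself, which your real-line result does not give, since sign information does not transfer through the periodization $G_{\mathbb{T}}(x) = \sum_{n} G_{\mathbb{R}}(x - 2\pi n)$ once $G_{\mathbb{R}}$ changes sign.

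Two points to tighten when writing this up. First, the ``main obstacle'' you flag is not actually delicate for fixed $\alpha \in (2,4)$: Watson's lemma gives a remainder $\mathcal{O}(x^{-2\alpha-1})$, the residue contributes $\mathcal{O}(e^{-x\sin(\pi/\alpha)})$, and both are $o(x^{-\alpha-1})$; uniformity as $\alpha \uparrow 4$ is never needed because the claim is for each fixed $\alpha$. You should, however, record that $|1+s^{\alpha}e^{i\pi\alpha/2}|^2 = 1+2s^{\alpha}\cos(\pi\alpha/2)+s^{2\alpha} \geq \sin^2(\pi\alpha/2) > 0$ for $\alpha \in (2,4)$, so the Laplace integrand is bounded and Watson's lemma legitimately applies (this bound fails exactly at $\alpha = 2$, where a pole sits on the contour). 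Second, in the finiteness-of-zeros step, make explicit that zeros cannot accumulate at $x = 0$: this follows from continuity and $g_\alpha(0) > 0$ (finite since $\alpha > 1$), after which the zeros lie in a compact subinterval of $(0,\infty)$ on which $g_\alpha$ is real analytic and not identically zero, hence finite in number.
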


Figure \ref{profile_super} illustrates the statement of Conjecture \ref{conj-main}. For $\alpha = 2.5$ (top), Green's function $G$ has the single-lobe positive profile for $c = 2$ (red curve) but it is not positive 
for $c = 10$ (blue curve). For $\alpha = 3.5$ (bottom), it is positive for $c = 1$ (red curve), has one pair of zeros for $c = 10$ (blue curve), and has two pairs of zeros for $c = 60$ (black curve).

\begin{figure}[h]
	\centering
	\includegraphics[width=0.7\linewidth]{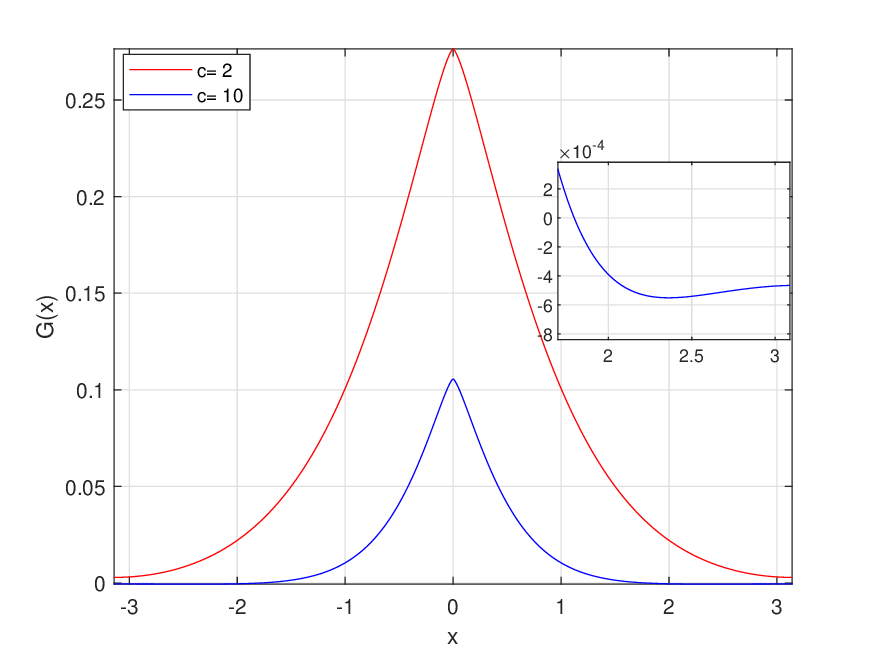}\\
	\includegraphics[width=0.49\linewidth]{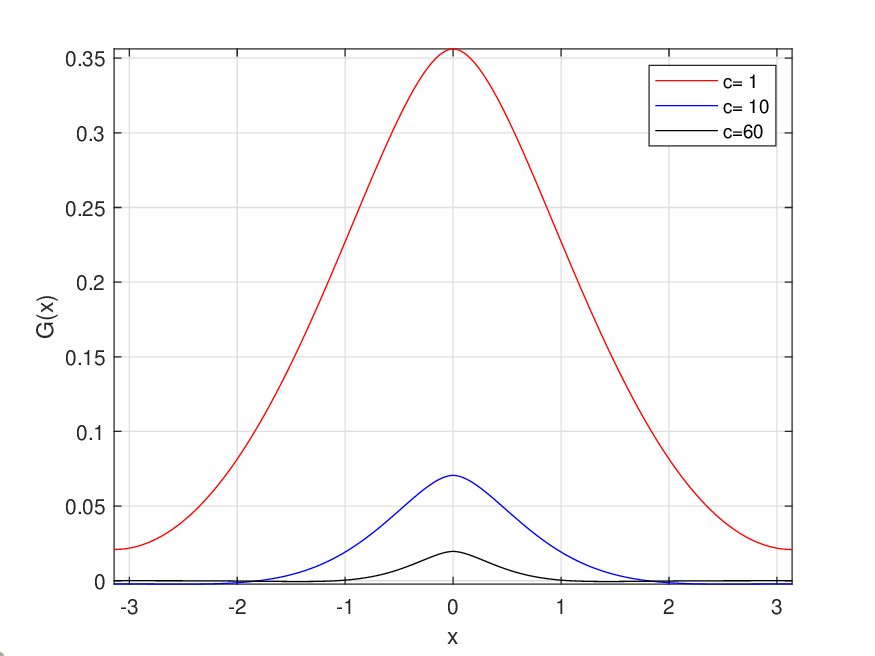}
	\includegraphics[width=0.49\linewidth]{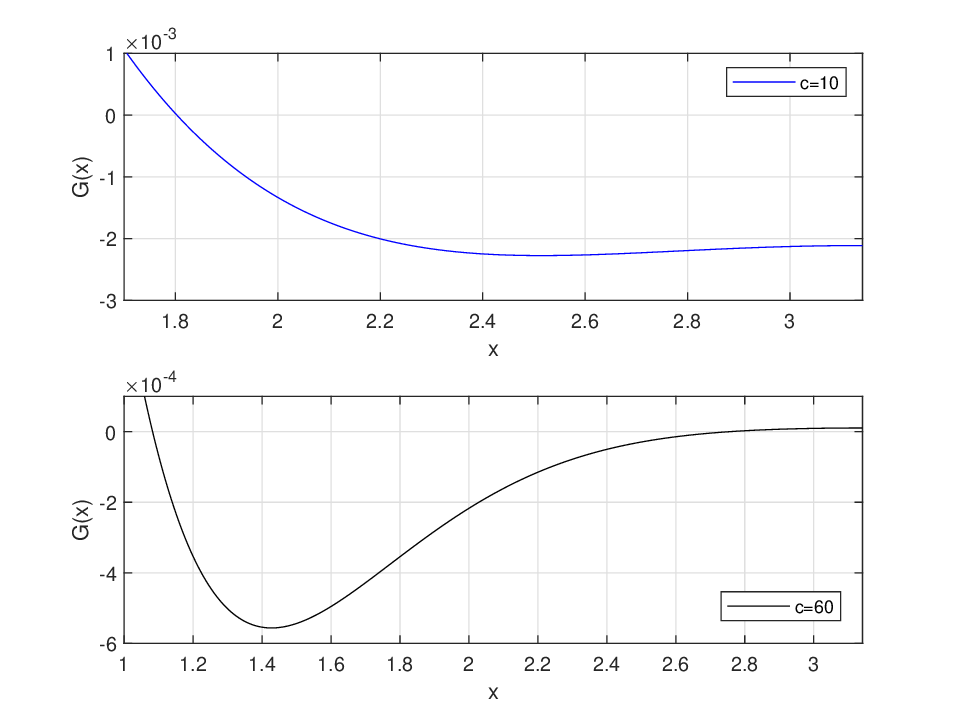}
	\caption{ Profiles of $G$ on $\mathbb{T}$ for $\alpha=2.5$ (top) and  $\alpha =3.5$ (bottom) at specific values of $c$.}
	\label{profile_super}
\end{figure}

The remainder of the article is structured as follow. Section \ref{ML} presents an overview of the Mittag-Leffler function and its properties. The integral representation of Green's function $G_{\mathbb{T}}$ is derived in Section \ref{int-rep}. The proof of Theorem \ref{main} is presented in Section \ref{mainproof}. The validity of Conjecture \ref{conj-main} is discussed 
in Section \ref{conclusion}. Conclusion is given in Section \ref{sec-conclusion}.  
Appendices \ref{appendix-a} and \ref{appendix-b} give explicit formulas for Green's function $G_{\mathbb{T}}$
for local cases of $\alpha = 2$ and $\alpha = 4$, respectively. 
Appendix \ref{appendix-c} contains formal asymptotic results 
to support Conjecture \ref{conj-main} for $\alpha > 2$ with small $|\alpha -2|$.

\section{Properties of Mittag--Leffler function}
\label{ML}

Here we discuss properties of the Mittag--Leffler function defined by 
	\begin{equation}
	\label{mlf}
		E_{\alpha}(x)= \sum_{k=0}^{\infty}\frac{x^k}{\Gamma(k\alpha+1)}, \qquad \alpha > 0,
	\end{equation}
and its two-parametric generalization defined by 
\begin{equation}
\label{mlf-2}
E_{\alpha, \beta}(x)= \sum_{k=0}^{\infty}\frac{x^k}{\Gamma(k\alpha+\beta)}, \qquad \alpha, \beta > 0.
\end{equation}

Mittag--Leffler functions were introduced in the theory of analytic functions \cite{M-L}. In recent years, they became popular due to their applications in fractional differential equations \cite{Kilbas}. Indepth studies of the Mittag--Leffler functions can be found in \cite{Mittag3} and \cite{Mittag2}. 

Mittag--Leffler functions are typically used to represent solutions of 
initial-value problems for the fractional differential equations defined by the  Riemann-Liouville or Caputo fractional derivatives \cite{Kilbas}. As our study involves the boundary-value problem for the fractional Laplacian $(-\Delta)^{\alpha/2}$, Mittag--Leffler function $E_{\alpha}(-x^\alpha)$ 
is used in the integral representation of Green's function $G_{\mathbb{T}}$. This integral representation is derived in Section \ref{int-rep}.

Here we review some important properties of the Mittag--Leffler functions.

	\begin{lemma}
		\label{deriv E}
	For every $\alpha >0$ and every $x\in \R$, it is true that 
\begin{equation}
\label{E-der}
E_{\alpha, \alpha}(x)= \alpha \frac{d}{dx}E_{\alpha}(x).
\end{equation}
\end{lemma}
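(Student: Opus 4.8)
The plan is to prove the identity (\ref{E-der}) by directly comparing power series. First I would differentiate the defining series (\ref{mlf}) for $E_\alpha$ term by term; this is legitimate because for every $\alpha > 0$ the series (\ref{mlf}) has infinite radius of convergence (the factor $\Gamma(k\alpha+1)$ grows super-exponentially in $k$ by Stirling's formula), so $E_\alpha$ is entire and may be differentiated termwise on any compact set. The constant term $k=0$ drops out, leaving $\frac{d}{dx}E_\alpha(x) = \sum_{k=1}^\infty k\, x^{k-1}/\Gamma(k\alpha+1)$.

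The key step is then an index shift together with a single application of the Gamma recurrence. Setting $j = k-1$ rewrites the derivative as $\sum_{j=0}^\infty (j+1)\,x^j/\Gamma\big((j+1)\alpha + 1\big)$. I would then apply $\Gamma(z+1) = z\,\Gamma(z)$ with $z = (j+1)\alpha$, namely
\begin{equation}
\Gamma\big((j+1)\alpha + 1\big) = (j+1)\alpha\, \Gamma\big((j+1)\alpha\big),
\end{equation}
which cancels the factor $(j+1)$ and yields $\frac{d}{dx}E_\alpha(x) = \frac1\alpha \sum_{j=0}^\infty x^j/\Gamma(j\alpha + \alpha)$. By the definition (\ref{mlf-2}) with $\beta = \alpha$, the right-hand side equals $\frac1\alpha E_{\alpha,\alpha}(x)$, and multiplying through by $\alpha$ gives exactly (\ref{E-der}).

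I do not anticipate a genuine obstacle. The identity is purely formal at the level of Taylor coefficients, and the only analytic point is the justification of term-by-term differentiation, which is immediate from the entire-function property of $E_\alpha$ for $\alpha > 0$. The whole argument thus reduces to one use of the functional equation $\Gamma(z+1) = z\,\Gamma(z)$ combined with the reindexing $j = k-1$.
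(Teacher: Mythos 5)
Your proposal is correct and follows essentially the same route as the paper: termwise differentiation of the series (justified by $E_\alpha$ being entire), one application of $\Gamma(z+1)=z\Gamma(z)$ to cancel the factor $k$, and a reindexing to recognize $E_{\alpha,\alpha}$. The only difference is the order in which you apply the reindexing and the Gamma recurrence, which is immaterial.
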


\begin{proof}
	The result is obtained by differentiating (\ref{mlf}) and using 
	(\ref{mlf-2}):
	\begin{equation*}
	\frac{d}{dx}E_{\alpha}(x)= \sum_{k=1}^{\infty}\frac{x^{k-1}}{\alpha\Gamma(\alpha k)} = \frac{1}{\alpha}\sum_{k=0}^{\infty}\frac{x^k}{\Gamma(\alpha k+\alpha)} = \frac{1}{\alpha} E_{\alpha,\alpha}(x).
	\end{equation*}
The series converges absolutely for every $x \in \mathbb{R}$ since $E_{\alpha}$ and $E_{\alpha,\alpha}$ are entire functions. 
\end{proof}

\begin{lemma}\cite{Pollard}
\label{complete monotonic}
For every $\alpha\in (0,1]$, 
	the function $x \mapsto E_{\alpha}(-x)$ 
	is positive and completely monotonic for $x \geq 0$, that is
	\begin{equation}
	\label{E-positive}
	(-1)^m\frac{d^m}{dx^m}E_{\alpha}(-x)\ge 0, \qquad m\in \mathbb{N}, \quad 
	x \geq 0.
	\end{equation}
Consequently, $E_{\alpha,\alpha}(-x) \geq 0$ for every $x \geq 0$.
\end{lemma}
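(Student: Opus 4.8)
The statement is the classical theorem of Pollard, so the plan is to exhibit $x\mapsto E_\alpha(-x)$ as the Laplace transform of a nonnegative density and then invoke Bernstein's theorem: a function on $[0,\infty)$ is completely monotonic precisely when it is the Laplace transform of a nonnegative measure. The endpoint $\alpha=1$ is disposed of at once, since $E_1(-x)=e^{-x}$ is completely monotonic by inspection, so I would assume $\alpha\in(0,1)$ throughout. To build the representation I would start from the Hankel-contour formula for the reciprocal Gamma function, insert it into the series (\ref{mlf}) with argument $-x$, interchange sum and integral, and sum the resulting geometric series to obtain
\begin{equation*}
E_\alpha(-x)=\frac{1}{2\pi i}\int_{Ha}\frac{e^{t}\,t^{\alpha-1}}{t^{\alpha}+x}\,dt ,
\end{equation*}
where $Ha$ is a Hankel contour around the branch cut on $(-\infty,0]$. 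Collapsing the contour onto the cut and computing the jump of $t^{\alpha-1}/(t^\alpha+x)$ across it gives the spectral representation
\begin{equation*}
E_\alpha(-x)=\frac{\sin(\pi\alpha)}{\pi}\int_0^\infty \frac{x\,e^{-r}\,r^{\alpha-1}}{r^{2\alpha}+2x r^\alpha\cos(\pi\alpha)+x^2}\,dr .
\end{equation*}
Here I would check that no pole is captured, because for $\alpha\in(0,1)$ the would-be pole $t^\alpha=-x$ sits at argument $\pi/\alpha>\pi$, off the principal sheet, and that the deformation is justified by the decay of the integrand. The integrand is manifestly nonnegative for $\alpha\in(0,1)$ and $x>0$, since $\sin(\pi\alpha)>0$ and the denominator equals $(r^\alpha+x\cos(\pi\alpha))^2+(x\sin(\pi\alpha))^2>0$; this already yields strict positivity $E_\alpha(-x)>0$.

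The delicate point is upgrading positivity to full complete monotonicity, because the spectral kernel above is not completely monotonic term by term (as a function of $x$ the factor $x/(x^2+2xr^\alpha\cos(\pi\alpha)+r^{2\alpha})$ first increases and then decreases). I would therefore pass to a genuine Laplace transform in $x$,
\begin{equation*}
E_\alpha(-x)=\int_0^\infty e^{-xs}\,M_\alpha(s)\,ds ,
\end{equation*}
with $M_\alpha$ recovered by Laplace inversion (the Mainardi/$M$-Wright density). Once this is in hand, differentiating under the integral gives $(-1)^m \frac{d^m}{dx^m}E_\alpha(-x)=\int_0^\infty s^m e^{-xs}M_\alpha(s)\,ds$, so (\ref{E-positive}) reduces exactly to the claim $M_\alpha(s)\ge 0$. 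Proving $M_\alpha\ge0$ is the main obstacle: the kernel is no longer an elementary rational function, and its nonnegativity is the genuine content of the theorem. I would establish it by subordination, expressing $M_\alpha$ through the (nonnegative) density of a one-sided $\alpha$-stable law, or by Pollard's direct sign analysis of the inversion integral.

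Finally, the consequence $E_{\alpha,\alpha}(-x)\ge 0$ needs only the single inequality $m=1$ in (\ref{E-positive}): this says $-\frac{d}{dx}E_\alpha(-x)\ge0$, i.e. $E_\alpha'(-x)\ge0$ by the chain rule, and Lemma \ref{deriv E} then gives $E_{\alpha,\alpha}(-x)=\alpha E_\alpha'(-x)\ge0$ for all $x\ge0$.
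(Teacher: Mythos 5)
The paper offers no proof of this lemma: it is imported verbatim from \cite{Pollard}, and the remark following it only records that Pollard's argument combines Bernstein's criterion (complete monotonicity is equivalent to being the Laplace transform of a nondecreasing bounded $F_\alpha$) with a contour-integral representation of $E_\alpha(-x)$. Your sketch follows exactly that route, and the pieces you do carry out are sound: the Hankel-contour formula for $E_\alpha(-x)$ is correct, the collapse onto the cut (with the check that no pole is crossed for $\alpha\in(0,1)$) gives a manifestly nonnegative integrand and hence strict positivity, and the closing deduction that the $m=1$ case of (\ref{E-positive}) together with Lemma \ref{deriv E} yields $E_{\alpha,\alpha}(-x)=\alpha E_\alpha'(-x)\ge 0$ is precisely the implicit step the paper relies on. You are also right, and commendably explicit, that the cut-collapsed representation does not give complete monotonicity term by term because the exponential sits in the spectral variable rather than in $x$. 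The one thing to flag is that the step you defer --- nonnegativity of the Laplace density $M_\alpha$, equivalently monotonicity of Pollard's $F_\alpha$ --- is not a technical detail but the entire content of the theorem; as written your argument is an outline whose completion is exactly the cited result. Since the lemma is a classical statement quoted with a reference, deferring that step to \cite{Pollard} (or to the positivity of the one-sided $\alpha$-stable density via subordination) is acceptable and matches what the paper does, but it should be presented as a citation rather than as a step still to be established.
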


\begin{remark} 
	A necessary and sufficent condition for the function $x \mapsto E_{\alpha}(-x)$  to be completely monotonic for $x \geq 0$ is that $E_{\alpha}(-x)$ can be expressed in the form 
 	\begin{equation*}
	E_{\alpha}(-x)= \int_{0}^{\infty}e^{-xt}dF_{\alpha}(t), \quad x \geq 0,
	\end{equation*}
	where $F_{\alpha}$ is a nondecreasing and bounded on $(0,\infty)$.
The proof of \cite{Pollard} is based on the representation of $E_{\alpha}(-x)$  given by 
	\begin{equation*}
	E_{\alpha}(-x)= \frac{1}{2i\pi \alpha}\int_{C} \frac{e^{t^{1/\alpha}}}{t+x}dt,
	\end{equation*}	
with a specially selected the contour $C$ in $\mathbb{C}$.
\end{remark}

\begin{lemma}\cite{Mittag2}
\label{asymptotic-mlf}	
For every $\alpha \in (0,2)$, $E_{\alpha}(-x^{\alpha})$ admits the asymptotic 
expansion
\begin{equation}
\label{mfl-asymp-1}
E_{\alpha}(-x^{\alpha}) = -\sum_{k=1}^N \frac{(-1)^k}{\Gamma(1-\alpha k) x^{\alpha k}} + \mathcal{O}\left(\frac{1}{|x|^{\alpha N + \alpha}}\right) \quad 
\mbox{\rm as} \quad x \to \infty,
\end{equation}
where $N \in \mathbb{N}$ is arbitrarily fixed. For every $\alpha \geq 2$, $E_{\alpha}(-x^{\alpha})$ admits the asymptotic 
expansion
\begin{equation}
\label{mfl-asymp-2}
E_{\alpha}(-x^{\alpha}) = \frac{1}{\alpha} 
\sum_{n = -N+1}^N e^{a_n x} + \mathcal{O}\left(\frac{1}{|x|^{\alpha}}\right) \quad 
\mbox{\rm as} \quad x \to \infty,
\end{equation}
where $a_n = e^{\frac{i\pi(2n-1)}{\alpha}}$ and $N$ is the largest integer satisfying the bound $2N-1 \leq \frac{\alpha}{2}$. 
\end{lemma}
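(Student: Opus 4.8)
The plan is to reconstruct this classical expansion from the contour representation recorded in the Remark above. Replacing $x$ by $x^{\alpha}$ there and substituting $t = \zeta^{\alpha}$ (so that $t^{1/\alpha} = \zeta$ and $dt = \alpha \zeta^{\alpha-1} d\zeta$) converts it into the Hankel-type representation
\begin{equation*}
E_{\alpha}(-x^{\alpha}) = \frac{1}{2\pi i} \int_{C} \frac{\zeta^{\alpha-1} e^{\zeta}}{\zeta^{\alpha} + x^{\alpha}}\, d\zeta,
\end{equation*}
where $C$ runs from $-\infty$ below the negative real axis, encircles the origin, and returns to $-\infty$ above it, taken with $|\zeta| > x$ on its circular arc. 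The entire analysis then reduces to extracting the large-$x$ behaviour of this single integral, and the only structural distinction between the two regimes is whether the poles of the integrand, located where $\zeta^{\alpha} = -x^{\alpha}$, are encountered when $C$ is collapsed onto the branch cut.

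For the algebraic expansion (\ref{mfl-asymp-1}) with $\alpha \in (0,2)$, I would expand the integrand as a finite geometric series,
\begin{equation*}
\frac{\zeta^{\alpha-1}}{\zeta^{\alpha} + x^{\alpha}} = \sum_{k=0}^{N-1} \frac{(-1)^k \zeta^{\alpha(k+1)-1}}{x^{\alpha(k+1)}} + R_N(\zeta,x),
\end{equation*}
integrate term by term, and evaluate each resulting integral through the Hankel representation of the reciprocal Gamma function, $\frac{1}{\Gamma(s)} = \frac{1}{2\pi i}\int_C \zeta^{-s} e^{\zeta}\, d\zeta$, which turns $\frac{1}{2\pi i}\int_C \zeta^{\alpha(k+1)-1} e^{\zeta}\, d\zeta$ into $1/\Gamma(1-\alpha(k+1))$. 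Relabelling $j = k+1$ and using $(-1)^{j-1} = -(-1)^j$ reproduces exactly the coefficients $-(-1)^j/\Gamma(1-\alpha j)$ of (\ref{mfl-asymp-1}). It then remains to show that the remainder $R_N$ contributes only $\mathcal{O}(x^{-\alpha N - \alpha})$; this is a routine tail estimate, since the factor $x^{-\alpha(N+1)}$ factors out and the residual integral over $C$ converges absolutely and stays bounded uniformly in $x$.

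The point at which $\alpha \geq 2$ differs is that, before $C$ can be collapsed, the residue theorem must be invoked. The zeros of $\zeta^{\alpha} + x^{\alpha}$ in the principal branch $|\arg \zeta| < \pi$ are $\zeta_n = x\, e^{i\pi(2n-1)/\alpha}$, each a simple pole at which the residue of the integrand equals $\frac{1}{\alpha} e^{\zeta_n} = \frac{1}{\alpha} e^{a_n x}$ with $a_n = e^{i\pi(2n-1)/\alpha}$. Among the poles swept over, only those with $\Real(a_n) \geq 0$, that is $|2n-1| \leq \alpha/2$ --- equivalently $n = -N+1, \dots, N$ with $N$ the largest integer obeying $2N-1 \leq \alpha/2$ --- yield terms that are not exponentially small as $x \to \infty$, and these are precisely the terms displayed in (\ref{mfl-asymp-2}). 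The leftover contour integral is then handled as in the previous paragraph and produces the algebraic tail of leading order $\mathcal{O}(x^{-\alpha})$, into which both the algebraic corrections and the exponentially decaying poles are absorbed.

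I expect the main obstacle to lie in the bookkeeping of the contour deformation for $\alpha \geq 2$: one must pin down exactly which poles are enclosed as $C$ is pushed toward the negative real axis (this is what fixes the summation range and the counting condition $2N-1 \leq \alpha/2$), confirm their simplicity, and justify that the decaying exponentials together with the algebraic series are genuinely dominated by the retained exponentials so that they may legitimately be swept into $\mathcal{O}(x^{-\alpha})$. By contrast, the purely algebraic regime $\alpha \in (0,2)$ is comparatively routine once the geometric-series remainder is controlled.
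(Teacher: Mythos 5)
The paper offers no proof of this lemma --- it is quoted from \cite{Mittag2} (see also \cite{Mittag3}) and used as an external input --- so there is nothing internal to compare against. Your reconstruction is the standard contour-integral derivation of these classical expansions and is essentially sound: the finite geometric expansion combined with Hankel's formula $1/\Gamma(s)=\frac{1}{2\pi i}\int_C\zeta^{-s}e^{\zeta}\,d\zeta$ does yield the coefficients $-(-1)^k/\Gamma(1-\alpha k)$ after the relabelling $j=k+1$, and the residues at the simple poles $\zeta_n=x\,e^{i\pi(2n-1)/\alpha}$ are indeed $\frac{1}{\alpha}e^{a_nx}$, with the retained ones being exactly those satisfying $\Real a_n\ge 0$, i.e.\ $|2n-1|\le\alpha/2$.

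Two points need more care than you give them. First, the representation $E_{\alpha}(-x^{\alpha})=\frac{1}{2\pi i}\int_C\zeta^{\alpha-1}e^{\zeta}\left(\zeta^{\alpha}+x^{\alpha}\right)^{-1}d\zeta$ cannot be obtained for $\alpha\ge 2$ by substituting $t=\zeta^{\alpha}$ in Pollard's formula: on a Hankel loop in the $t$-plane one has $\Real t^{1/\alpha}=|t|^{1/\alpha}\cos(\pi/\alpha)>0$ as $t\to-\infty$ when $\alpha>2$, so that integral does not even converge, and the image of the loop under $t\mapsto t^{1/\alpha}$ is a wedge of half-angle $\pi/\alpha$ about the positive real axis rather than the Hankel contour you describe. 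The clean route is to posit the $\zeta$-plane Hankel representation with circular arc of radius $r>x$ directly and verify it by expanding $\zeta^{\alpha-1}(\zeta^{\alpha}+x^{\alpha})^{-1}=\sum_{k\ge 0}(-x^{\alpha})^{k}\zeta^{-\alpha k-1}$, which converges uniformly on that contour, and integrating term by term via Hankel's formula; this is valid for all $\alpha>0$. Second, your dichotomy ``poles are encountered iff $\alpha\ge 2$'' is inaccurate: for $\alpha\in(1,2)$ the poles $x\,e^{\pm i\pi/\alpha}$ do lie in $|\arg\zeta|<\pi$ and are crossed when the arc is shrunk below radius $x$; they merely contribute terms of size $e^{x\cos(\pi/\alpha)}$ with $\cos(\pi/\alpha)<0$, hence exponentially small and absorbable into the algebraic error. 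Relatedly, the ``routine tail estimate'' for $R_N$ requires a lower bound $|\zeta^{\alpha}+x^{\alpha}|\ge c_{\alpha}x^{\alpha}$ along the rays $\arg\zeta=\pm\pi$, which holds for $\alpha\ne 1$ (the case $\alpha=1$ is trivial since $E_1(-x)=e^{-x}$ and every algebraic coefficient $1/\Gamma(1-k)$ vanishes). With these repairs your argument is complete and matches the proof in the cited reference in spirit, which however reaches $\alpha\ge 2$ via the multiplication formula reducing to smaller $\alpha$ rather than by direct contour deformation.
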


\begin{remark}
Asymptotic expansions (\ref{mfl-asymp-1}) and (\ref{mfl-asymp-2}) can be differentiated term by term.
\end{remark}

\begin{remark}
	\label{rem-explicit}
	We list the explicit cases of the Mittag--Leffler function $E_{\alpha}(-x^{\alpha})$ for the first integers:
	\begin{eqnarray*}
		\alpha = 1, &&\quad E_{1}(-x)= e^{-x}, \\
		\alpha = 2, &&\quad E_{2}(-x^2)= \cos(x), \\
		\alpha = 3, &&\quad  E_{3}(-x^3)= \frac{1}{3} e^{-x} + \frac{2}{3} \displaystyle{e^{\frac{x}{2}}} \cos\left(\frac{\sqrt{3}x}{2}\right), \\	
		\alpha = 4, &&\quad E_{4}(-x^4)= \cos\left(\frac{x}{\sqrt{2}}\right) 
		\cosh\left(\frac{x}{\sqrt{2}}\right).
	\end{eqnarray*}
For $\alpha = 1$, the asymptotic representation (\ref{mfl-asymp-1}) admits zero leading-order terms for every $N \in \mathbb{N}$. The asymptotic representation (\ref{mfl-asymp-2}) is also obvious from the exact expressions for $\alpha = 2,3,4$, moreover, the remainder term 
	is zero for $\alpha = 2$ and can be included to the summation by increasing $N$ by one for $\alpha = 3$ and $\alpha = 4$.
\end{remark}

\begin{lemma}\cite{Mittag2}
	\label{integralE}
	For every $\alpha \in (0, 2)$ and every $x\in \R$, $E_{\alpha}(-x)$ satisfies the following integral representation,
	\begin{equation}
	\label{integral-repr}
	E_{\alpha}(-x^\alpha)= \frac{2}{\pi}\sin\left(\frac{\pi\alpha}{2}\right)\int_{0}^{\infty}\frac{t^{\alpha-1}\cos(xt)}{1+2t^{\alpha}\cos\left(\frac{\pi\alpha}{2}\right)+t^{2\alpha}}dt.
	\end{equation}
\end{lemma}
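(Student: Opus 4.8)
The plan is to route through the Laplace transform of $E_\alpha(-x^\alpha)$ and then invert a Fourier cosine transform. Since the right-hand side of (\ref{integral-repr}) is manifestly even in $x$, it suffices to treat $x>0$ and extend by evenness. First I would compute, for $\Real(p)>0$,
\begin{equation*}
\int_0^\infty e^{-px}\, E_\alpha(-x^\alpha)\, dx = \frac{p^{\alpha-1}}{p^\alpha+1}.
\end{equation*}
This follows by inserting the series (\ref{mlf}), interchanging summation and integration, and using $\int_0^\infty e^{-px}x^{\alpha k}\,dx = \Gamma(\alpha k+1)\,p^{-\alpha k-1}$ together with the geometric series $\sum_{k\ge 0}(-p^{-\alpha})^k$. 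The interchange is legitimate for $|p|>1$, $\Real(p)>0$, where the series of absolute values converges; the identity then extends to the whole half-plane $\Real(p)>0$ by analytic continuation of $p^{\alpha-1}/(p^\alpha+1)$.

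Second, I would pass to the boundary value $p=-it$ with $t>0$. Writing $p=t e^{-i\pi/2}$ on the principal branch gives $p^\alpha=t^\alpha e^{-i\pi\alpha/2}$, and since $\int_0^\infty E_\alpha(-x^\alpha)\cos(xt)\,dx = \Real\int_0^\infty E_\alpha(-x^\alpha)e^{ixt}\,dx$, a short computation of the real part yields
\begin{equation*}
\int_0^\infty E_\alpha(-x^\alpha)\cos(xt)\,dx = \Real\,\frac{p^{\alpha-1}}{p^\alpha+1}\Big|_{p=-it} = \frac{t^{\alpha-1}\sin\!\left(\frac{\pi\alpha}{2}\right)}{1+2t^\alpha\cos\!\left(\frac{\pi\alpha}{2}\right)+t^{2\alpha}}.
\end{equation*}
Here the denominator is $|1+t^\alpha e^{i\pi\alpha/2}|^2$, and the singularities of $p^{\alpha-1}/(p^\alpha+1)$ off the positive real axis stay away from the imaginary axis for $\alpha\in(0,2)$, coalescing onto it at $p=\pm i$ exactly when $\alpha=2$ — which is precisely why that endpoint is excluded. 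Applying Fourier cosine inversion to this identity then reproduces (\ref{integral-repr}) after pulling the constant $\frac{2}{\pi}\sin\!\left(\frac{\pi\alpha}{2}\right)$ out of the integral.

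The hard part will be justifying the boundary-value passage $p\to -it$ and the cosine inversion. For $\alpha\in(1,2)$ this is routine: $E_\alpha(-x^\alpha)\in L^1(0,\infty)$, being bounded near $x=0$ and decaying like $x^{-\alpha}$ by (\ref{mfl-asymp-1}), so both the transform and its inverse converge absolutely. The delicate range is $\alpha\in(0,1]$, where the $x^{-\alpha}$ tail is not integrable; there $\int_0^\infty E_\alpha(-x^\alpha)e^{ixt}\,dx$ must be read as a conditionally convergent (Abel-regularized) oscillatory integral, and one must show $\lim_{\epsilon\to 0^+}\mathcal{L}[E_\alpha(-x^\alpha)](\epsilon-it)$ equals that value. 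Complete monotonicity of $E_\alpha(-x)$ for $\alpha\in(0,1]$ (Lemma \ref{complete monotonic}) supplies the monotone decay needed for this Abel/Tauberian control, and it also secures the inversion by guaranteeing integrability of the resulting transform $t^{\alpha-1}\sin\!\left(\frac{\pi\alpha}{2}\right)/(1+2t^\alpha\cos\!\left(\frac{\pi\alpha}{2}\right)+t^{2\alpha})$, which is $O(t^{\alpha-1})$ as $t\to 0^+$ and $O(t^{-\alpha-1})$ as $t\to\infty$. As a consistency check one can verify the case $x=0$, where the $t$-integral evaluates (via $u=t^\alpha$ and $\arctan$) to $\pi/[2\sin(\frac{\pi\alpha}{2})]$, so the right-hand side equals $E_\alpha(0)=1$.
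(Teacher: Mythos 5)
The paper does not actually prove Lemma \ref{integralE}: it is quoted from \cite{Mittag2}, and the only original content surrounding it is the remark that delimits its validity to $\alpha\in(0,2)$ (via the counterexample at $\alpha=3$, $x=0$). Your proposal supplies a genuine proof where the paper supplies a citation, and the route you take --- the Laplace transform identity $\mathcal{L}[E_\alpha(-x^\alpha)](p)=p^{\alpha-1}/(p^\alpha+1)$, boundary values at $p=-it$, and Fourier cosine inversion --- is the standard derivation and is essentially correct: the algebra of the real part checks out, the denominator $1+2t^\alpha\cos(\tfrac{\pi\alpha}{2})+t^{2\alpha}=|1+t^\alpha e^{i\pi\alpha/2}|^2$ is strictly positive precisely for $\alpha\in(0,2)$, and your $x=0$ consistency check is right. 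What your approach buys over the citation is that it makes the failure at $\alpha=2$ structurally transparent (the poles of $p^{\alpha-1}/(p^\alpha+1)$ reach the imaginary axis), which complements the paper's remark. Two small points to tighten: the domain where the term-by-term Laplace computation converges absolutely is $\Real(p)>1$, not ``$|p|>1$, $\Real(p)>0$'' (harmless, since analytic continuation covers the rest of the half-plane); and the inversion for $\alpha\in(0,1)$, where $E_\alpha(-x^\alpha)\notin L^1(0,\infty)$, should be pinned to a specific statement --- e.g.\ Pringsheim's form of the cosine inversion theorem for continuous, nonincreasing functions vanishing at infinity --- rather than left at the level of ``Abel/Tauberian control''; complete monotonicity from Lemma \ref{complete monotonic} does give you the hypotheses of that theorem, so the gap is one of bookkeeping rather than substance.
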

\begin{remark}
	It is claimed in \cite{Mittag2} that the integral representation (\ref{integral-repr}) is true for all $\alpha >0$, however, 
	the integral is singular for $\alpha = 2$ and a discrepancy exists 
	at $x = 0$ for $\alpha > 2$. For example, when $\alpha =3$, it follows 
	from (\ref{mlf}) that $E_{3}(0)=1$ whereas computing the integral given in \eqref{integral-repr} via the change of variable $u = t^3$ gives
	\begin{align*}
	E_{3}(0)&=-\frac{2}{3\pi}\int_{0}^{\infty}\frac{du}{1+u^2}= -\frac{1}{3}\ne 1.
	\end{align*} 
	Hence, the integral representation (\ref{integral-repr}) can only be used 
	for $\alpha \in (0,2)$, for which $E_{\alpha}(-x^{\alpha})$ is bounded and 
	decaying as $x \to +\infty$. 
\end{remark}

\section{Integral representation of Green's function $G_{\mathbb{T}}$}
\label{int-rep}

Here, we take Green's function $G_{\mathbb{T}}$ defined by the Fourier series in (\ref{green-fourier}) and rewrite it in the integral form 
involving the Mitag--Leffler function $E_{\alpha, \alpha}$. 
The following proposition gives the result for $\alpha \in (0,2]$.

\begin{proposition}
	\label{prop-1}
For every $c > 0$ and every $\alpha \in (0,2]$, it is true that 
\begin{equation}
\label{G-int}
G_{\mathbb{T}}(x) = \frac{1}{2\pi c} + \frac{1}{\pi c} \int_{0}^{\infty}\left(\frac{e^t\cos(x)-1}{1-2e^t\cos(x)+e^{2t}}\right)t^{\alpha-1}E_{\alpha, \alpha}(-ct^\alpha)dt, \quad x \in \mathbb{T}.
\end{equation}
\end{proposition}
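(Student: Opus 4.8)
The plan is to reduce formula \eqref{G-int} to a single Laplace-type representation of the Fourier coefficients in \eqref{green-fourier}, after which the closed-form kernel appears by summing an elementary geometric series. The $n=0$ term of \eqref{green-fourier} already yields the constant $\frac{1}{2\pi c}$, so the entire content of the proposition is the evaluation of $\sum_{n\ge1}\frac{\cos(nx)}{c+n^\alpha}$. The identity I would establish first is
\[
\frac{1}{c+n^{\alpha}}=\int_{0}^{\infty}e^{-nt}\,t^{\alpha-1}E_{\alpha,\alpha}(-c\,t^{\alpha})\,dt,\qquad n\ge 1,
\]
i.e. the Laplace transform of $t^{\alpha-1}E_{\alpha,\alpha}(-ct^{\alpha})$ evaluated at the spectral variable $s=n$; this is the bridge between the Mittag--Leffler function and Green's function.

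To prove this identity I would insert the defining series \eqref{mlf-2}, write $E_{\alpha,\alpha}(-ct^\alpha)=\sum_{k\ge0}\frac{(-c)^k t^{\alpha k}}{\Gamma(\alpha k+\alpha)}$, multiply by $t^{\alpha-1}$, and integrate term by term against $e^{-nt}$ using $\int_0^\infty e^{-nt}t^{\alpha k+\alpha-1}\,dt=\Gamma(\alpha k+\alpha)\,n^{-\alpha(k+1)}$. The Gamma factors cancel and leave the geometric series $n^{-\alpha}\sum_{k\ge0}(-c\,n^{-\alpha})^{k}=(c+n^{\alpha})^{-1}$. This is immediate for the indices with $n^\alpha>c$; for the finitely many $n$ with $n^\alpha\le c$ I would obtain the identity by analytic continuation in $c$, noting that the integral converges for every $c>0$ because $t^{\alpha-1}$ is integrable at $t=0$ and, by Lemma~\ref{deriv E} together with the asymptotics of Lemma~\ref{asymptotic-mlf}, $E_{\alpha,\alpha}(-ct^\alpha)$ is bounded and decays as $t\to\infty$. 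The same computation applies verbatim at $\alpha=2$, where $E_{2,2}(-ct^2)=\frac{\sin(\sqrt c\,t)}{\sqrt c\,t}$.

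With the identity in hand I would substitute it into the cosine series, interchange summation and integration, and carry out the geometric sum
\[
\sum_{n\ge1}e^{-nt}\cos(nx)=\Real\!\left(\frac{e^{-t+ix}}{1-e^{-t+ix}}\right)=\frac{e^{t}\cos x-1}{1-2e^{t}\cos x+e^{2t}},\qquad t>0,
\]
which is exactly the kernel in \eqref{G-int}. Collecting the prefactors from \eqref{green-fourier} then reproduces \eqref{G-int}, uniformly for $\alpha\in(0,2]$.

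The step I expect to be the main obstacle is the interchange of the sum over $n$ with the $t$-integral, which cannot be justified by Tonelli's theorem with absolute values: the bound $\sum_{n\ge1}e^{-nt}|\cos nx|\le \frac{e^{-t}}{1-e^{-t}}=O(t^{-1})$ combined with the weight $t^{\alpha-1}$ fails to be integrable at $t=0$ when $\alpha\le1$ (the analytic reflection of $G_{\mathbb{T}}(0)=+\infty$ for $\alpha\le1$), so the cancellation in $\cos(nx)$ must be retained. I would instead fix $x\in(0,\pi]$, bound the partial sums $S_N(t)=\sum_{n=1}^N e^{-nt}\cos(nx)$ uniformly in $N$ by $|S_N(t)|\le \frac{2e^{-t}}{|1-e^{-t+ix}|}$, which is bounded near $t=0$ and $O(e^{-t})$ at infinity, so that $|S_N(t)|\,t^{\alpha-1}E_{\alpha,\alpha}(-ct^\alpha)$ is dominated by an integrable function independent of $N$, and then pass to the limit by dominated convergence. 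The subordinate point is the non-convergent geometric regime $n^\alpha\le c$ in the termwise identity, which the analytic-continuation argument above resolves; for $\alpha\in(1,2]$ the same domination also covers $x=0$.
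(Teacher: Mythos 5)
Your proof is correct, and it takes a genuinely different (in effect, dual) route to the paper's. The paper expands $\frac{1}{c+n^{\alpha}}$ as a geometric series in powers of $c$ (which forces the restriction $c\in(0,1)$ from the outset), evaluates each resulting sum $\sum_{n\ge1}\cos(nx)n^{-\alpha(k+1)}$ by the tabulated Prudnikov integral \eqref{sum-integral}, interchanges the $k$-sum with the $t$-integral by Fubini, and only then removes the restriction on $c$ by analytic continuation of the whole identity \eqref{G-int}. You instead establish the Laplace-transform pair $\int_0^\infty e^{-nt}t^{\alpha-1}E_{\alpha,\alpha}(-ct^{\alpha})\,dt=(c+n^{\alpha})^{-1}$ for each single $n$ (the classical transform of the two-parameter Mittag--Leffler function, found in the paper's reference \cite{Mittag2}), and then sum the elementary geometric series $\sum_{n\ge1}e^{-nt}\cos(nx)$ to produce the kernel. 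The two arguments are two orderings of the same triple sum/integral --- indeed the Prudnikov formula \eqref{sum-integral} is itself $\Gamma(s)n^{-s}=\int_0^\infty e^{-nt}t^{s-1}dt$ followed by your geometric summation over $n$ --- but your ordering buys several things: the closed-form kernel appears transparently rather than from a table; the smallness restriction on $c$ is confined to the finitely many indices with $n^{\alpha}\le c$ in an elementary termwise identity, rather than to the entire representation; and the delicate interchange is the single $\sum_n\leftrightarrow\int$ step, which you correctly observe cannot be done with absolute values when $\alpha\le1$ and which you handle properly via the uniform partial-sum bound $|S_N(t)|\le 2e^{-t}/|1-e^{-t+ix}|$ and dominated convergence (this is, if anything, more carefully justified than the paper's interchange of the two series in \eqref{sum-integral-0}, which is not absolutely convergent in $(n,k)$ jointly when $\alpha\le1$). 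The price is that you must still invoke analyticity in $c$, as the paper does, and you should state explicitly (as the paper does in one sentence) that for $\alpha\in(0,1]$ and $x=0$ both sides of \eqref{G-int} equal $+\infty$, so the identity is interpreted accordingly there; with that remark added, your argument is complete.
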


\begin{proof}
	Assume first that $x \neq 0$ and $c \in (0,1)$. Expanding each term of the trinometric sum in (\ref{green-fourier}) into absolutely convergent geometric series and interchanging the two series, we obtain  
\begin{equation}
\label{sum-integral-0}
\sum_{n=1}^{\infty}\frac{\cos(nx)}{c+n^\alpha}
=\sum_{n=1}^{\infty}\frac{\cos(nx)}{n^\alpha}\sum_{k=0}^{\infty}\left(\frac{-c}{n^\alpha}\right)^k= \sum_{k=0}^{\infty}(-c)^k\sum_{n=1}^{\infty}\frac{\cos(nx)}{n^{\alpha(k+1)}}.
\end{equation}
It is known from the integral representation (1) in \cite[Section 5.4.2]{Prudnikov} that for every $x \ne 0$ and $\alpha > 0$ that 
\begin{equation}
\label{sum-integral}
\sum_{n=1}^{\infty}\frac{\cos(nx)}{n^{\alpha(k+1)}}= \frac{1}{\Gamma(\alpha k+ \alpha)}\int_{0}^{\infty}\frac{t^{\alpha(k+1)-1}\left(e^t\cos(x)-1\right)}{1-2e^t\cos(x)+e^{2t}}dt,
\end{equation}
where $k \geq 0$. Substituting \eqref{sum-integral} into (\ref{sum-integral-0}) 
and interchanging formally the summation and the integration yields the 
following representation:
\begin{align}
\sum_{n=1}^{\infty}\frac{\cos(nx)}{c+n^\alpha} &= \sum_{k=0}^{\infty}\frac{(-c)^k}{\Gamma(\alpha k+\alpha)}\int_{0}^{\infty}\frac{t^{\alpha(k+1)-1}\left(e^t\cos(x)-1\right)}{1-2e^t\cos(x)+e^{2t}}dt,
\label{technical-expansions} \\
&= \int_{0}^{\infty}\left(\frac{e^t\cos(x)-1}{1-2e^t\cos(x)+e^{2t}}\right)t^{\alpha-1}\sum_{k=0}^{\infty}\frac{\left(-ct^\alpha\right)^k}{\Gamma(\alpha k+ \alpha)} dt,\\
&= \int_{0}^{\infty}\left(\frac{e^t\cos(x)-1}{1-2e^t\cos(x)+e^{2t}}\right)t^{\alpha-1}E_{\alpha, \alpha}(-ct^\alpha)dt.
\end{align} 
This yields formally the integral formula (\ref{G-int}). Let us now justify 
the interchange of summation and integration in (\ref{technical-expansions}).
Using the chain rule and Lemma \ref{deriv E}, we get
\begin{equation}
\label{chain-rule}
t^{\alpha-1}E_{\alpha, \alpha}(-ct^\alpha)= -\frac{1}{c}\frac{d}{dt}E_{\alpha}(-ct^\alpha).
\end{equation}
It follows from (\ref{chain-rule}) that for every 
$\alpha \in (0,2]$, the asymptotic expansion 
(\ref{mfl-asymp-1}) in Lemma \ref{asymptotic-mlf} for $\alpha \in (0,2)$ 
and Remark \ref{rem-explicit} for $\alpha = 2$ imply that 
\begin{equation}
\sup_{t \in [0,\infty)} t^{\alpha - 1} |E_{\alpha,\alpha}(-t^{\alpha})| < \infty.
\label{mtf-value-1}
\end{equation}
Hence, the integral in (\ref{G-int}) converges absolutely for every $x \neq 0$ and $\alpha \in (0,2]$. Similarly, the integral in (\ref{technical-expansions}) 
converges absolutely for every $x \neq 0$ and $\alpha \in (0,2]$, whereas 
the numerical series converges absolutely for every $c \in (0,1)$. Thus, 
the interchange of summation and integration in 
(\ref{technical-expansions}) is justified by Fubini's theorem. 

For $x = 0$, we note that $G_{\mathbb{T}}(0) < \infty$ if $\alpha > 1$ and 
$G_{\mathbb{T}}(0) = \infty$ if $\alpha \in (0,1]$. Since $E_{\alpha,\alpha}(-x^{\alpha}) = 1 + \mathcal{O}(x^{\alpha})$ as $x \to 0$, the integral in (\ref{G-int}) 
converges absolutely for $x = 0$ and $\alpha \in (1,2]$ and diverges 
for $x = 0$ and $\alpha \in (0,1]$. Hence, the integral representation 
(\ref{G-int}) holds again for $x = 0$, $c \in (0,1)$, and $\alpha \in (0,2]$.

In order to extend the integral representation (\ref{G-int}) from $c \in (0,1)$ to every $c > 0$, we use real analyticity of Green's function $G_{\mathbb{T}}$ and the integral in (\ref{G-int}) in $c$ for $c > 0$. Due to uniqueness of the analytical continuation of both $G_{\mathbb{T}}$ and the integral in (\ref{G-int}) in $c$, the equality in (\ref{G-int}) is uniquely continued from $c \in (0,1)$ to $c > 0$.
\end{proof}

The integral representation (\ref{G-int}) 
of Green's function $G_{\mathbb{T}}$ can be justified 
for $\alpha > 2$ provided that $c$ is sufficiently small. This result is 
described by the following proposition.

\begin{proposition}
	\label{prop-2}
For every $\alpha > 2$, there exists $c_{\alpha} > 0$ given by 
\begin{equation}
\label{c-alpha}
c_{\alpha} := \left[ \cos\left(\frac{\pi}{\alpha}\right) \right]^{-\alpha},
\end{equation}
such that for every $c \in (0,c_{\alpha})$, the integral 
representation (\ref{G-int}) is true for every $x \in \mathbb{T}$.
\end{proposition}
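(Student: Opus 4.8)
The plan is to follow the template of the proof of Proposition \ref{prop-1}, modified to account for the fact that for $\alpha > 2$ the Mittag--Leffler function $E_{\alpha}(-x^{\alpha})$ grows rather than decays. First I would establish (\ref{G-int}) on the small interval $c \in (0,1)$, and then extend it to $c \in (0,c_{\alpha})$ by analytic continuation in $c$. The key new ingredient, relative to Proposition \ref{prop-1}, is to locate the exact threshold $c_{\alpha}$ at which the integral in (\ref{G-int}) ceases to converge, and to verify that this integral defines an analytic function of $c$ on the whole interval $(0,c_{\alpha})$.

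The first step is to pin down the growth rate of the integrand. Using (\ref{chain-rule}) I would write $t^{\alpha-1}E_{\alpha,\alpha}(-ct^{\alpha}) = -c^{-1}\frac{d}{dt}E_{\alpha}(-ct^{\alpha})$ and substitute $x = c^{1/\alpha}t$ into the asymptotic expansion (\ref{mfl-asymp-2}) of Lemma \ref{asymptotic-mlf}, which may be differentiated term by term according to the remark following that lemma. Among the exponents $a_n = e^{i\pi(2n-1)/\alpha}$ the largest real part is attained at $n = 0$ and $n = 1$, namely $\Real(a_0) = \Real(a_1) = \cos(\pi/\alpha)$, so that $|t^{\alpha-1}E_{\alpha,\alpha}(-ct^{\alpha})| \leq C\, e^{c^{1/\alpha}\cos(\pi/\alpha)\, t}$ for $t$ large. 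Since the rational--exponential prefactor in (\ref{G-int}) satisfies $\left| \frac{e^{t}\cos(x)-1}{1-2e^{t}\cos(x)+e^{2t}} \right| \leq C_x\, e^{-t}$ for all $t \geq 0$ and fixed $x \neq 0$, the full integrand is $O\!\left( e^{(c^{1/\alpha}\cos(\pi/\alpha)-1)t} \right)$ as $t \to \infty$. The integral therefore converges absolutely precisely when $c^{1/\alpha}\cos(\pi/\alpha) < 1$, i.e. when $c < [\cos(\pi/\alpha)]^{-\alpha} = c_{\alpha}$, which is exactly how the threshold (\ref{c-alpha}) enters. The case $x=0$ is handled as in Proposition \ref{prop-1} and is even easier here, since $\alpha > 2 > 1$ guarantees integrability near $t = 0$.

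Next I would prove (\ref{G-int}) for $c \in (0,1)$. Note that $\cos(\pi/\alpha) \in (0,1)$ for $\alpha > 2$ forces $c_{\alpha} > 1$, so $(0,1) \subset (0,c_{\alpha})$ and the base interval sits safely inside the region of convergence. On $(0,1)$ the geometric expansion (\ref{sum-integral-0}) and the representation (\ref{sum-integral}) go through verbatim, and the interchange of summation and integration in (\ref{technical-expansions}) is justified by Tonelli's theorem: using the prefactor bound above together with the identity $\int_{0}^{\infty} t^{\alpha(k+1)-1}e^{-t}\,dt = \Gamma(\alpha k + \alpha)$, the $\Gamma$-factors cancel and the sum of absolute values collapses to $C_x \sum_{k=0}^{\infty} c^{k} = C_x/(1-c) < \infty$. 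This reproduces (\ref{G-int}) for every $x \in \mathbb{T}$ and $c \in (0,1)$, exactly as in Proposition \ref{prop-1}; in fact this clean cancellation works for every $\alpha > 0$.

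Finally I would extend the identity from $(0,1)$ to $(0,c_{\alpha})$ by analytic continuation. The left-hand side $G_{\mathbb{T}}$ is real-analytic in $c$ on $(0,\infty)$ from its uniformly convergent Fourier series. For the right-hand side, on any compact $[a,b] \subset (0,c_{\alpha})$ the estimate from the second step provides the $c$-uniform integrable majorant $\mathrm{const}\cdot e^{(b^{1/\alpha}\cos(\pi/\alpha)-1)t}$, so differentiation under the integral sign (or Morera's theorem) shows that the integral in (\ref{G-int}) is analytic in $c$ on $(0,c_{\alpha})$. Since the two analytic functions agree on $(0,1)$, the identity theorem forces them to agree on all of $(0,c_{\alpha})$, completing the proof. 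The main obstacle is the growth estimate of the first step: one must correctly extract the dominant exponential rate $\cos(\pi/\alpha)$ from (\ref{mfl-asymp-2}), and weigh it against the decaying prefactor $e^{-t}$, since this balance is precisely what produces the explicit threshold $c_{\alpha}$.
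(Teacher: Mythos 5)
Your proposal is correct and takes essentially the same route as the paper: the paper's proof likewise extracts the growth rate $e^{t\cos(\pi/\alpha)}$ of $t^{\alpha-1}|E_{\alpha,\alpha}(-t^{\alpha})|$ from the asymptotic expansion (\ref{mfl-asymp-2}) via (\ref{chain-rule}), balances it against the $e^{-t}$ decay of the prefactor to obtain the threshold $c_{\alpha}$, and then invokes the formal computations of Proposition \ref{prop-1}. Your write-up merely spells out more explicitly the Tonelli estimate and the analytic continuation from $(0,1)$ to $(0,c_{\alpha})$, which the paper compresses into the final sentence of its proof.
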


\begin{proof}
The asymptotic expansion (\ref{mfl-asymp-2}) in Lemma \ref{asymptotic-mlf} 
implies for every $c > 0$ and $\alpha > 2$ that 
	\begin{equation}
	\sup_{t \in [0,\infty)} e^{-t \cos\left(\frac{\pi}{\alpha}\right)} t^{\alpha - 1}  |E_{\alpha,\alpha}(-t^{\alpha})| < \infty,
	\label{mtf-value-2}
	\end{equation}
where we have used again the connection formula (\ref{chain-rule}).
In addition, $E_{\alpha,\alpha}(-x^{\alpha}) = 1 + \mathcal{O}(x^{\alpha})$ 
as $x \to 0$. Due to the above properties, the integral in (\ref{G-int}) converges absolutely for every $x \in \mathbb{T}$ if $c \in (0,c_{\alpha})$, where $c_{\alpha}$ is given by (\ref{c-alpha}). This justifies the formal computations in the proof of Proposition \ref{prop-1}.
\end{proof}

\begin{remark}
	For $c  \geq c_{\alpha}$ and $\alpha > 2$, the Fourier series representation (\ref{green-fourier}) suggests that $|G_{\mathbb{T}}(x)| < \infty$ for every $x \in \mathbb{T}$. However, the integral in (\ref{G-int}) does not converge absolutely, hence it is not clear if the integral representation (\ref{G-int}) can be used in this case. Our numerical results in Section \ref{conclusion} show that the integral representation (\ref{G-int}) 
	cannot be used for $c > c_{\alpha}$.
\end{remark}

\section{Green's function $G_{\mathbb{T}}$ for $\alpha \in (0,2)$}
\label{mainproof}

Here, we prove Theorem \ref{main} by using the integral representation 
(\ref{G-int}) in terms of the Mittag--Leffler function $E_{\alpha,\alpha}$.
It follows from \eqref{green-fourier} that $G_{\mathbb{T}}$ is even for every $c > 0$ and $\alpha >0$. Furthermore, if $\alpha \in (0,1]$, then $\lim\limits_{x\to 0} G_{\mathbb{T}}(x)= +\infty$, and if $\alpha >1$, then
\begin{equation*}
G_{\mathbb{T}}(0)= \frac{1}{2\pi}\left(\frac{1}{c}+ 2\sum_{n=1}^{\infty}\frac{1}{c+n^\alpha}\right)>0.
\end{equation*}
We shall prove that $G_{\mathbb{T}}'(x) \leq 0$ for $x \in (0,\pi)$ and 
$G_{\mathbb{T}}(\pi) > 0$ 
for every $c > 0$ and $\alpha \in (0,2]$. For $\alpha = 2$, this result follows from 
the exact analytical representation of $G_{\mathbb{T}}$ in Appendix \ref{appendix-a}. 
Therefore, we focus on the case $\alpha \in (0,2)$ here.
The following proposition gives an integral representation for $G_{\mathbb{T}}(\pi)$ 
which implies its strict positivity for every $c > 0$ and $\alpha \in (0,2)$

\begin{proposition}
	\label{prop-3}
	For every $c > 0$ and every $\alpha \in (0,2)$, it is true that 
	\begin{equation}
	\label{G-pi}
G_{\mathbb{T}}(\pi) =\frac{\sin(\frac{\alpha\pi}{2})}{\pi c^{1-\frac{1}{\alpha}}} \int_{0}^{\infty} \frac{s^\alpha \csch(\pi c^{\frac{1}{\alpha}}s)}{1+ 2s^\alpha \cos(\frac{\alpha\pi}{2}) + s^{2\alpha}}   ds,
	\end{equation}
which implies $G_{\mathbb{T}}(\pi) > 0$. 
\end{proposition}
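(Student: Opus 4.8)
The plan is to start from the integral representation (\ref{G-int}) of Proposition \ref{prop-1}, evaluate it at $x=\pi$, replace the Mittag--Leffler factor by a sine transform, and interchange the order of integration. First I would set $x=\pi$ in (\ref{G-int}); since $\cos(\pi)=-1$, the rational kernel collapses to
\[
\frac{e^t\cos(\pi)-1}{1-2e^t\cos(\pi)+e^{2t}}=\frac{-(e^t+1)}{(e^t+1)^2}=-\frac{1}{e^t+1},
\]
so that $G_{\mathbb{T}}(\pi)$ becomes an explicit constant minus a fixed multiple of $\int_0^\infty \frac{t^{\alpha-1}E_{\alpha,\alpha}(-ct^\alpha)}{e^t+1}\,dt$.

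Next I would rewrite the Mittag--Leffler factor as a sine transform. By the chain rule (\ref{chain-rule}), $t^{\alpha-1}E_{\alpha,\alpha}(-ct^\alpha)=-c^{-1}\frac{d}{dt}E_{\alpha}(-ct^\alpha)$; applying the integral representation (\ref{integral-repr}) of Lemma \ref{integralE} with argument $c^{1/\alpha}t$ and differentiating in $t$ under the integral yields
\[
t^{\alpha-1}E_{\alpha,\alpha}(-ct^\alpha)=\frac{2c^{\frac{1}{\alpha}-1}}{\pi}\sin\left(\frac{\pi\alpha}{2}\right)\int_0^\infty \frac{s^\alpha\sin(c^{1/\alpha}ts)}{1+2s^\alpha\cos\left(\frac{\pi\alpha}{2}\right)+s^{2\alpha}}\,ds,
\]
which is where the Mittag--Leffler denominator of (\ref{G-pi}) appears. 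Substituting this into the $x=\pi$ expression and interchanging the $s$- and $t$-integrations, the inner integral becomes $\int_0^\infty \frac{\sin(at)}{e^t+1}\,dt$ with $a=c^{1/\alpha}s$. I would evaluate it by expanding $(e^t+1)^{-1}=\sum_{n\geq1}(-1)^{n+1}e^{-nt}$ and summing the resulting series through the partial-fraction expansion of $\csch$, obtaining $\frac{1}{2a}-\frac{\pi}{2}\csch(\pi a)$. The $\csch$ term reproduces exactly the integral in (\ref{G-pi}); the leftover $\frac{1}{2a}$ term combines with the elementary integral $\int_0^\infty \frac{s^{\alpha-1}}{1+2s^\alpha\cos(\frac{\pi\alpha}{2})+s^{2\alpha}}\,ds=\frac{\pi}{2\sin(\frac{\pi\alpha}{2})}$ (obtained via $u=s^\alpha$ and an arctangent), and a short computation shows these cancel the remaining constant term precisely, leaving (\ref{G-pi}). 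Positivity is then immediate: for $\alpha\in(0,2)$ one has $\sin(\frac{\pi\alpha}{2})>0$, the denominator equals $(s^\alpha+\cos(\frac{\pi\alpha}{2}))^2+\sin^2(\frac{\pi\alpha}{2})>0$, and $\csch(\pi c^{1/\alpha}s)>0$ for $s>0$, so the integrand is strictly positive and $G_{\mathbb{T}}(\pi)>0$.

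The main obstacle will be justifying the interchange of the order of integration. After taking absolute values, the $s$-integrand decays only like $s^{-\alpha}$ at infinity, so Fubini's theorem applies directly only for $\alpha\in(1,2)$. For $\alpha\in(0,1]$ I would avoid a naive application of Fubini: instead I would either first establish (\ref{G-pi}) for $\alpha\in(1,2)$ and extend to $\alpha\in(0,1]$ by analyticity of both sides in $\alpha$, or keep the combination $\frac{1}{2a}-\frac{\pi}{2}\csch(\pi a)$ intact --- it is bounded and, being $O(a)$ as $a\to0^+$ and exponentially small as $a\to\infty$, makes the final $s$-integral absolutely convergent --- and justify the identity through the regularized integral $\int_0^\infty e^{-\varepsilon t}(\cdots)\,dt$ followed by dominated convergence as $\varepsilon\downarrow0$. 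Once this interchange is secured, the remaining steps are routine.
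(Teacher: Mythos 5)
Your proposal is correct and arrives at \eqref{G-pi}, but it takes a genuinely different route through the middle of the argument. The paper first substitutes the chain rule \eqref{chain-rule} into the $x=\pi$ expression and \emph{integrates by parts in $t$}, which does two things at once: the boundary term at $t=0$ absorbs the additive constant $\frac{1}{2\pi c}$ exactly (yielding the clean formula \eqref{G-pi2} with the kernel $e^t(1+e^t)^{-2}$), and the resulting double integral obtained after inserting the cosine representation \eqref{integral-repr} of $E_\alpha$ is absolutely convergent for \emph{all} $\alpha\in(0,2)$, since the $s$-integrand then decays like $s^{-\alpha-1}$; Fubini applies immediately, and the inner integral is the tabulated Prudnikov integral $\int_0^\infty e^t\cos(at)(1+e^t)^{-2}dt=\frac{\pi}{2}a\csch(\pi a)$. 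You instead differentiate the cosine representation to get a sine transform of $E_{\alpha,\alpha}$, evaluate $\int_0^\infty\sin(at)(1+e^t)^{-1}dt=\frac{1}{2a}-\frac{\pi}{2}\csch(\pi a)$, and cancel the constant against the arctangent integral $\int_0^\infty s^{\alpha-1}D(s)^{-1}ds=\frac{\pi}{2}\csc(\frac{\pi\alpha}{2})$; all of these identities check out, and your explicit cancellation is in effect the same bookkeeping the paper performs via the boundary term of the integration by parts. (Incidentally, your cancellation only closes if the prefactor of the integral in \eqref{G-int} is $\frac{1}{\pi}$ rather than the printed $\frac{1}{\pi c}$ --- the paper's own passage from \eqref{G-pi1} to \eqref{G-pi2} also tacitly requires this, so the discrepancy is a typo in \eqref{G-int} rather than an error in your argument.) The one real cost of your ordering is the Fubini step you flag: with the kernel $(1+e^t)^{-1}$ and the factor $s^\alpha D(s)^{-1}\sim s^{-\alpha}$, absolute integrability fails for $\alpha\in(0,1]$, so you need the regularization $e^{-\varepsilon t}$ (or analytic continuation in $\alpha$) as a patch; the paper's choice to integrate by parts \emph{before} swapping is precisely what makes this issue disappear, and is the cleaner of the two routes. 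Your positivity argument at the end coincides with the paper's.
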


\begin{proof}
Evaluating the integral representation \eqref{G-int} at $x= \pi$, we obtain
\begin{equation}\label{G-pi1}
G_{\mathbb{T}}(\pi)= \frac{1}{2\pi c} - \frac{1}{\pi c} \int_{0}^{\infty}\frac{1}{1+e^t}t^{\alpha-1}E_{\alpha, \alpha}(-ct^\alpha)dt.
\end{equation}
Substituting (\ref{chain-rule}) into \eqref{G-pi1}, integrating by parts, 
and using the asymptotic representation (\ref{mfl-asymp-1}) to get zero contribution in the limit of $t \to \infty$, we obtain 
\begin{equation}\label{G-pi2}
G_{\mathbb{T}}(\pi)= \frac{1}{\pi c} \int_{0}^{\infty}\frac{e^t}{(1+e^t)^2}E_{\alpha}(-ct^\alpha)dt,
\end{equation} 	
where the integral converges absolutely for every $c > 0$ and $\alpha \in (0,2)$. Substituting the integral representation (\ref{integral-repr}) for  $E_{\alpha}(-c t^{\alpha})$ from Lemma \ref{integralE} into \eqref{G-pi2}, we obtain 
\begin{equation}
\label{interchange}
G_{\mathbb{T}}(\pi)=  \frac{2}{\pi^2 c} \sin\left(\frac{\alpha\pi}{2}\right)
\int_{0}^{\infty} \frac{e^t}{(1+e^t)^2}\int_{0}^{\infty}\frac{s^{\alpha-1}\cos(c^{\frac{1}{\alpha}}ts)}{1+2s^\alpha\cos(\frac{\pi \alpha}{2})+ s^{2\alpha}}  dsdt.
\end{equation}
Since both integrands belong to $L^1(0, \infty)$, the order of integration in (\ref{interchange}) can be interchanged to get
\begin{equation}
\label{interchange-2}
G_{\mathbb{T}}(\pi) = \frac{2}{\pi^2 c} \sin\left(\frac{\alpha\pi}{2}\right) 
\int_{0}^{\infty}\frac{s^{\alpha-1}}{1+2s^\alpha\cos(\frac{\pi \alpha}{2})+ s^{2\alpha}}\int_{0}^{\infty}\frac{e^t\cos(c^{\frac{1}{\alpha}}st)}{(1+e^t)^2}dtds.
\end{equation}
The inner integral is evaluated exactly with the help of 
integral (7) in \cite[Section 2.5.46]{Prudnikov}:
\begin{align*}
\int_{0}^{\infty}\frac{e^t\cos(c^{\frac{1}{\alpha}}st)}{(1+e^t)^2}dt =   \frac{\pi}{2} c^{\frac{1}{\alpha}} s \csch(\pi c^{\frac{1}{\alpha}}s),
\end{align*}
When it is substituted into (\ref{interchange-2}), it yields the 
integral representation (\ref{G-pi}). The integrand is positive 
and absolutely integrable for every $c > 0$ and $\alpha \in (0,2)$, 
which implies that $G_{\mathbb{T}}(\pi) > 0$.
\end{proof}

\begin{remark}
	Positivity of $G_{\mathbb{T}}(\pi)$ for $c > 0$ and $\alpha \in (0,1]$ also follows from 
	the representation (\ref{G-pi2}) due to positivity of $E_{\alpha}(-ct^{\alpha})$ for every $t > 0$ 
	in Lemma \ref{complete monotonic}. However, 
	$E_{\alpha}(-ct^{\alpha})$ is not positive for all $t>0$ when $\alpha>1$, 
	hence, the representation (\ref{G-pi2}) is not sufficient for the proof 
	of positivity of $G_{\mathbb{T}}(\pi)$ if $\alpha \in (1,2)$.
\end{remark}

It remains to prove that $G_{\mathbb{T}}'(x) \leq 0$ for every $x \in (0,\pi)$. The proof is carried differently for $\alpha \in (0,1]$ and for $\alpha\in (1,2)$.  
In the former case, we obtain the integral representation for $G_{\mathbb{T}}'(x)$, 
which is strictly negative for $x \in (0,\pi)$. In the latter case, we employ the variational method to verify that the unique solution $G_{\mathbb{T}}$ of 
the boundary-value problem \eqref{green} admits the single lobe profile, with the only maximum located at the point of symmetry at $x=0$. The following two propositions give these two results.

\begin{proposition}
	\label{prop-4}
	For every $c > 0$ and every $\alpha \in (0,1]$, 
$G_{\mathbb{T}}'(x) < 0$ for every $x \in (0,\pi)$. 
\end{proposition}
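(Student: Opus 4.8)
The plan is to obtain an explicit integral representation for $G_{\mathbb{T}}'(x)$ that is manifestly negative on $(0,\pi)$, in the same spirit as Proposition~\ref{prop-3}. Starting from the integral representation (\ref{G-int}), I would differentiate under the integral sign with respect to $x$. The $x$-dependence sits entirely in the rational kernel
\[
K(t,x) := \frac{e^t\cos(x)-1}{1-2e^t\cos(x)+e^{2t}},
\]
so the first task is to compute $\partial_x K(t,x)$. A direct calculation should yield
\[
\partial_x K(t,x) = \frac{-\sin(x)\,e^t\,(e^{2t}-1)}{\left(1-2e^t\cos(x)+e^{2t}\right)^2},
\]
and since $t>0$ gives $e^{2t}-1>0$ and $e^t>0$, while the denominator is a square, the sign of $\partial_x K$ is exactly $-\sin(x)$, which is strictly negative for $x\in(0,\pi)$. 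Thus
\[
G_{\mathbb{T}}'(x) = \frac{1}{\pi c}\int_0^\infty \partial_x K(t,x)\, t^{\alpha-1} E_{\alpha,\alpha}(-ct^\alpha)\,dt.
\]

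The decisive structural point is that $\alpha\in(0,1]$ is exactly the range where Lemma~\ref{complete monotonic} guarantees $E_{\alpha,\alpha}(-ct^\alpha)\ge 0$ for all $t\ge 0$. With both $\partial_x K(t,x)<0$ (for fixed $x\in(0,\pi)$ and $t>0$) and $t^{\alpha-1}E_{\alpha,\alpha}(-ct^\alpha)\ge 0$, the integrand is nonpositive throughout, and it is strictly negative on a set of positive measure, so the integral is strictly negative. The remaining analytic bookkeeping is to justify differentiation under the integral sign: I would fix a closed subinterval $[x_0,x_1]\subset(0,\pi)$, bound $|\partial_x K(t,x)|$ uniformly in $x$ on that subinterval by an integrable majorant in $t$ (using $\sin x\le 1$, the lower bound on the denominator away from $x=0$, and the decay estimate (\ref{mtf-value-1}) which controls $t^{\alpha-1}|E_{\alpha,\alpha}(-ct^\alpha)|$), and invoke the dominated-convergence form of the Leibniz rule; strict negativity then transfers pointwise to every $x\in(0,\pi)$.

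I expect the main obstacle to be the uniform integrable domination near $t=0$ rather than near $t=\infty$. As $t\to\infty$ the numerator of $\partial_x K$ is $O(e^{3t})$ against a denominator $O(e^{4t})$, giving decay like $e^{-t}$, which combined with the boundedness from (\ref{mtf-value-1}) poses no difficulty. Near $t=0$, however, $\partial_x K(t,x)$ vanishes like $t$ (since $e^{2t}-1\sim 2t$), while $t^{\alpha-1}$ blows up for $\alpha<1$; the product behaves like $t^{\alpha}$, which is integrable, but one must also check that the denominator $(1-2e^t\cos x+e^{2t})^2$ stays bounded below on $[x_0,x_1]$ uniformly as $t\to 0$ — this is where restricting to a compact subinterval of $(0,\pi)$ is essential, since at $x=0$ the denominator degenerates. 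Once these bounds are in hand the argument closes cleanly, and the case $\alpha=1$ can either be absorbed into the same estimate or handled separately via $E_1(-ct)=e^{-ct}$ giving an elementary explicit integrand.
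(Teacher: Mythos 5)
Your proposal is correct and follows essentially the same route as the paper's proof: differentiate the integral representation (\ref{G-int}) in $x$, obtain the kernel derivative $-\sin(x)\, e^t(e^{2t}-1)/\bigl(1-2e^t\cos x+e^{2t}\bigr)^2$, and invoke Lemma \ref{complete monotonic} to get $E_{\alpha,\alpha}(-ct^{\alpha})\ge 0$ precisely for $\alpha\in(0,1]$, which forces the integrand to have the sign of $-\sin(x)$. Your additional care in justifying differentiation under the integral sign (uniform domination on compact subintervals of $(0,\pi)$, behavior near $t=0$ and $t=\infty$) goes beyond the paper's brief remark that the integrand is absolutely integrable, but it is the same argument.
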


\begin{proof}
Differentiating the integral representation \eqref{G-int} in $x$ yields
\begin{align}
\nonumber
G_{\mathbb{T}}'(x)&=\frac{1}{\pi c}\int_{0}^{\infty}t^{\alpha-1}E_{\alpha, \alpha}(-ct^{\alpha})\frac{d}{dx}\left(\frac{e^t\cos(x)-1}{1-2e^t\cos(x)+e^{2t}}\right)dt,\\
\label{G-der-pos}
&= -\frac{\sin(x)}{\pi c}\int_{0}^{\infty}t^{\alpha-1}E_{\alpha, \alpha}(-ct^{\alpha})\frac{e^t(e^{2t}-1)}{\left(1-2e^t\cos(x)+e^{2t}\right)^2}dt,
\end{align} 
where the integrand is absolutely integrable. 
It follows by Lemma \ref{complete monotonic} that $E_{\alpha, \alpha}(-ct^\alpha)\ge 0$ for $t > 0$. Since $\sin(x)> 0$ for $x\in(0,\pi)$, and the integrand is positive, it follows from the integral representation (\ref{G-der-pos}) that  $G_{\mathbb{T}}'(x)<0$ for $x\in (0,\pi)$. 
\end{proof} 

\begin{proposition}
	\label{prop-5}
	For every $c > 0$ and every $\alpha \in (1,2)$, 
	$G_{\mathbb{T}}'(x) \leq 0$ for every $x \in (0,\pi)$. 
\end{proposition}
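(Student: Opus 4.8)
The plan is to characterize $G_{\mathbb{T}}$ as the unique minimizer of a strictly convex functional and then to show that this minimizer must coincide with its symmetric decreasing rearrangement. The range $\alpha \in (1,2)$ is precisely the range in which $\alpha/2 > 1/2$, so that $H^{\alpha/2}(\mathbb{T}) \hookrightarrow C(\mathbb{T})$ and point evaluation $f \mapsto f(0)$ is a bounded linear functional; this is exactly what makes the variational method available here but not in Proposition \ref{prop-4}, where $G_{\mathbb{T}}(0) = \infty$ and $\delta \notin H^{-\alpha/2}(\mathbb{T})$. First I would introduce the functional
\begin{equation*}
J(f) := \frac12\left(c\,\|f\|_{L^2(\mathbb{T})}^2 + \sum_{n\in\mathbb{Z}}|n|^\alpha |f_n|^2\right) - f(0),
\end{equation*}
which is strictly convex, coercive, and weakly lower semicontinuous on $H^{\alpha/2}(\mathbb{T})$ since the quadratic part controls $\|f\|_{H^{\alpha/2}}^2$ for $c>0$ and the linear part is bounded by the Sobolev embedding. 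Its unique minimizer solves the Euler--Lagrange equation $[c+(-\Delta)^{\alpha/2}]f = \delta$ in the weak sense, and comparing Fourier coefficients shows this minimizer is exactly $G_{\mathbb{T}}$.

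Next I would bring in the symmetric decreasing rearrangement $f \mapsto f^\ast$ on the circle, normalized so that $f^\ast$ is even and nonincreasing on $[0,\pi]$ with the same distribution function as $f$. The goal is the inequality $J(f^\ast) \le J(f)$ for every $f \in H^{\alpha/2}(\mathbb{T})$, which follows from three facts: rearrangement preserves the $L^2$ norm, $\|f^\ast\|_{L^2} = \|f\|_{L^2}$; the maximum moves to the origin, $f^\ast(0) = \mathrm{ess\,sup}\,f \ge f(0)$, so the term $-f(0)$ does not increase; and the fractional P\'olya--Szeg\H{o} inequality $\sum_{n}|n|^\alpha |f^\ast_n|^2 \le \sum_{n}|n|^\alpha |f_n|^2$. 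Granting these, $G_{\mathbb{T}}^\ast$ is also a minimizer of $J$, so uniqueness of the minimizer forces $G_{\mathbb{T}} = G_{\mathbb{T}}^\ast$ almost everywhere. Since $G_{\mathbb{T}}$ is continuous for $\alpha > 1$ and agrees almost everywhere with the nonincreasing function $G_{\mathbb{T}}^\ast$ on $[0,\pi]$, it coincides with its symmetric decreasing rearrangement pointwise, which is precisely the assertion $G_{\mathbb{T}}'(x) \le 0$ on $(0,\pi)$.

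The main obstacle is the P\'olya--Szeg\H{o} inequality itself, since the singular-integral (Gagliardo) kernel $\propto |x|^{-1-\alpha}$ fails to be integrable for $\alpha \ge 1$ and one cannot naively split the quadratic form into an $L^1$-norm term and a convolution term. Instead I would use the subordination identity valid for $\alpha \in (0,2)$,
\begin{equation*}
\sum_{n\in\mathbb{Z}}|n|^\alpha |f_n|^2 = C_\alpha \int_0^\infty t^{-1-\alpha/2}\left(\|f\|_{L^2}^2 - \langle e^{t\Delta}f, f\rangle\right)dt,
\end{equation*}
where $\langle e^{t\Delta}f, f\rangle = \iint_{\mathbb{T}\times\mathbb{T}} f(x)f(y)\,p_t(x-y)\,dx\,dy$ and $p_t$ is the periodic heat kernel, a positive, even, symmetric decreasing probability density for each $t>0$. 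Because $p_t$ is symmetric decreasing, the Riesz rearrangement inequality on the circle gives $\langle e^{t\Delta}f, f\rangle \le \langle e^{t\Delta}f^\ast, f^\ast\rangle$; combined with $\|f^\ast\|_{L^2} = \|f\|_{L^2}$, the integrand above does not increase under rearrangement, and integrating in $t$ yields the P\'olya--Szeg\H{o} inequality. The remaining technical points are to confirm that $p_t$ is indeed symmetric decreasing on $\mathbb{T}$ and to justify the regularity step that upgrades equality almost everywhere to equality everywhere.
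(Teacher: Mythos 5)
Your proposal is correct and follows essentially the same route as the paper: characterize $G_{\mathbb{T}}$ as the unique minimizer of $B_c(u)-u(0)$ over $H^{\alpha/2}_{\rm per}(\mathbb{T})$ (using that $\delta\in H^{-\alpha/2}_{\rm per}(\mathbb{T})$ precisely when $\alpha>1$), then invoke the fractional P\'olya--Szeg\H{o} inequality together with uniqueness of the minimizer to conclude that $G_{\mathbb{T}}$ coincides with its symmetric decreasing rearrangement, whence $G_{\mathbb{T}}'\leq 0$ on $(0,\pi)$. The only substantive difference is that you also sketch a subordination/heat-kernel proof of the P\'olya--Szeg\H{o} inequality, which the paper simply cites from the appendix of Claasen--Johnson.
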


\begin{proof}
The proof consists of the following two steps. First, we obtain a variational solution to the boundary-value problem \eqref{green}. Second, we use the fractional Polya--Szeg\"o inequality to show that the solution $G_{\mathbb{T}}$ has a single-lobe profile on $\mathbb{T}$ with the only maximum located at the point of symmetry at $x=0$.

\textbf{Step 1}: Let us consider the following minimization problem,
\begin{equation}\label{minimizer}
\mathcal{B}_{c}:=\min_{u\in H^{\frac{\alpha}{2}}_{\rm per}(\mathbb{T})}\lbrace B_{c}(u)- u(0)\rbrace,
\end{equation}
where the quadratic functional $B_{c}(u)$ is given by
\begin{equation}
\label{bilinear}
B_{c}(u)= \frac{1}{2} 
\int_{\mathbb{T}} \left[ \left(D^{\frac{\alpha}{2}}u\right)^2 +cu^2 \right] dx.
\end{equation}
Since $c>0$, we have 
\begin{align*}
\frac{1}{2}\min(1, c)\|u\|_{ H^{\frac{\alpha}{2}}_{\rm per}(\mathbb{T})}&\le B_{c}(u) \le \frac{1}{2}\max(1, c)\|u \|_{H^{\frac{\alpha}{2}}_{\rm per}(\mathbb{T})},
\end{align*}
hence, $B_{c}(u)$ is equivalent to the squared $H^{\frac{\alpha}{2}}_{\rm per}(\mathbb{T})$ norm. Moreover, for $\alpha\in (1, 2)$, $\delta \in H^{-\frac{\alpha}{2}}_{\rm per}(\mathbb{T})$, the dual of $H^{\frac{\alpha}{2}}_{\rm per}(\mathbb{T})$ since 
	\begin{equation*}
	\|\delta\|_{H^{-\frac{\alpha}{2}}_{\rm per}(\mathbb{T})}=\sum_{\xi\in \mathbb{Z}}\frac{1}{\left(1+ |\xi|^2\right)^{\frac{\alpha}{2}}} < \infty.
	\end{equation*}
Thus, by Lax--Milgram theorem (see Corollary 5.8 in \cite{Brezis}), there exists a unique $G_{\mathbb{T}} \in H^{\frac{\alpha}{2}}_{\rm per}(\mathbb{T})$ such that $G_{\mathbb{T}}$ is the global minimizer of the variational problem \eqref{minimizer}, 
for which the Euler--Lagrange equation is equivalent to the boundary-value problem (\ref{green}). By uniqueness of solutions of the two problems, $G_{\mathbb{T}}$ is equivalently written as the Fourier series (\ref{green-fourier}), from which it follows that $G_{\mathbb{T}}(\pi) < G_{\mathbb{T}}(0)$. Hence, $G_{\mathbb{T}}$ is different from a constant function on $\mathbb{T}$.

\begin{remark}
	The variational method and in particular the Lax--Milgram theorem cannot be applied to the case $\alpha \in (0,1]$ since the Dirac delta distibution $\delta$ does not belong to the dual space of $H^{\frac{\alpha}{2}}_{\rm per}(\mathbb{T})$ when $\alpha \in (0,1]$.
\end{remark}

\textbf{Step 2}: We utilize the fractional Polya--Szeg\"o inequality, proved in the appendix of \cite{Johnson}, to show that a symmetric decreasing rearrangement of the minimizer $G_{\mathbb{T}}$ on $\mathbb{T}$ does not increase $B_c(u)$. For completeness, we state the following definition and lemma.

\begin{definition}
	Let $m$ be the Lebesgue measure on $\mathbb{T}$ and $f(x): \R\to \R$ be a $2\pi$ periodic function. The symmetric and decreasing rearrangement $\tilde {f}$ of $f$ on $\mathbb{T}$ is given by
	\begin{equation}
	\tilde{f}(x)= \inf\lbrace t : \;\; m(\lbrace z\in \mathbb{T} : \;\; 
	f(z)>t \rbrace) \le 2|x|\rbrace,  \hspace{0.2in} x\in \mathbb{T}. 
	\end{equation}
	\label{def-sym}
\end{definition}
The rearrangement $\tilde{f}$ satisfies the following properties:
\begin{itemize}
	\item [i)] $\tilde{f}(-x) = \tilde{f}(x)$ and $f'(x) \leq 0$ for $x \in (0, \pi)$.
	\item [ii)]$\tilde{f}(0)= \max_{x\in \mathbb{T}}f(x)$.
	\item [iii)] $\|\tilde{f}\|_{L^2(\mathbb{T})}= \|f\|_{L^2(\mathbb{T})}$.
\end{itemize}

\begin{lemma} \cite{Johnson} \label{Polya}
	For every $\alpha > 1$ and every $f\in H^{\frac{\alpha}{2}}_{per}(\mathbb{T})$, it is true that 
	\begin{equation}
	\int_{-\pi}^{\pi}|D^{\frac{\alpha}{2}}\tilde{f}|^2dx \le \int_{-\pi}^{\pi}|D^{\frac{\alpha}{2}}f|^2dx. 
	\end{equation}
	\label{lem-PS}
\end{lemma}

The argument of the proof in the second step goes as follows. Suppose $\widetilde{G}_{\mathbb{T}}$ is the symmetric and  decreasing rearrangement of $G_{\mathbb{T}}$, then by Lemma \ref{Polya} and by property (iii) of Definition \ref{def-sym} we have $B_{c}(\widetilde{G}_{\mathbb{T}}) \le B_{c}(G_{\mathbb{T}})$.
Since the global minimizer of the variational problem \eqref{minimizer} is uniquely given by $G_{\mathbb{T}}$, $\widetilde{G}_{\mathbb{T}}$ coincides with $G_{\mathbb{T}}$ up to a translation on $\mathbb{T}$. However, it follows from (\ref{green-fourier}) that $G_{\mathbb{T}}(-x) = G_{\mathbb{T}}(x)$ and $G_{\mathbb{T}}(\pi) < G_{\mathbb{T}}(0)$, hence an internal maximum at $x_0 \in (0,\pi)$ would contradicts to the single-lobe profile of $G_{\mathbb{T}}$ and the only maximum of $G_{\mathbb{T}}$ 
is located at $0$, so that $G_{\mathbb{T}}(x) = \widetilde{G}_{\mathbb{T}}(x)$ for every $x \in \mathbb{T}$. It follows from property (i) of Definition \ref{def-sym} that $G_{\mathbb{T}}'(x) \leq 0$ for $x \in (0,\pi)$. 
\end{proof}

\section{Green's function $G_{\mathbb{T}}$ for $\alpha > 2$}
\label{conclusion}

Here we provide numerical approximations of the Green's function $G_{\mathbb{T}}$ for $\alpha > 2$, which support Conjecture \ref{conj-main}.
The profiles of $G_{\mathbb{T}}$ are depicted on Figure \ref{profile_super}. 
We only give details on how zeros of $G_{\mathbb{T}}(\pi)$ depend on parameters $(c,\alpha)$.

It follows from the Fourier series \eqref{green-fourier} that $G_{\mathbb{T}}(\pi)$ can be computed by the numerical series
\begin{equation}
\label{G-1}
G_{\mathbb{T}}(\pi) = \frac{1}{2\pi} \left(\frac{1}{c} + 2 \sum_{n = 1}^{\infty} \frac{(-1)^n}{c + n^{\alpha}} \right),
\end{equation}
where the series converges absolutely if $\alpha > 1$. On the other hand, 
$G_{\mathbb{T}}(\pi)$ can also be computed from the integral representation 
(\ref{G-pi2}), that is, 
\begin{equation}\label{G-2}
G_{\mathbb{T}}(\pi)= \frac{1}{\pi c} \int_{0}^{\infty}\frac{e^t}{(1+e^t)^2}E_{\alpha}(-ct^\alpha)dt,
\end{equation} 	
which converges absolutely for $c \in (0,c_{\alpha})$, see Proposition \ref{prop-2}, where $c_{\alpha}$ is given by (\ref{c-alpha}). 

\begin{figure}[htpb]
	\centering
	\includegraphics[width=0.49\linewidth]{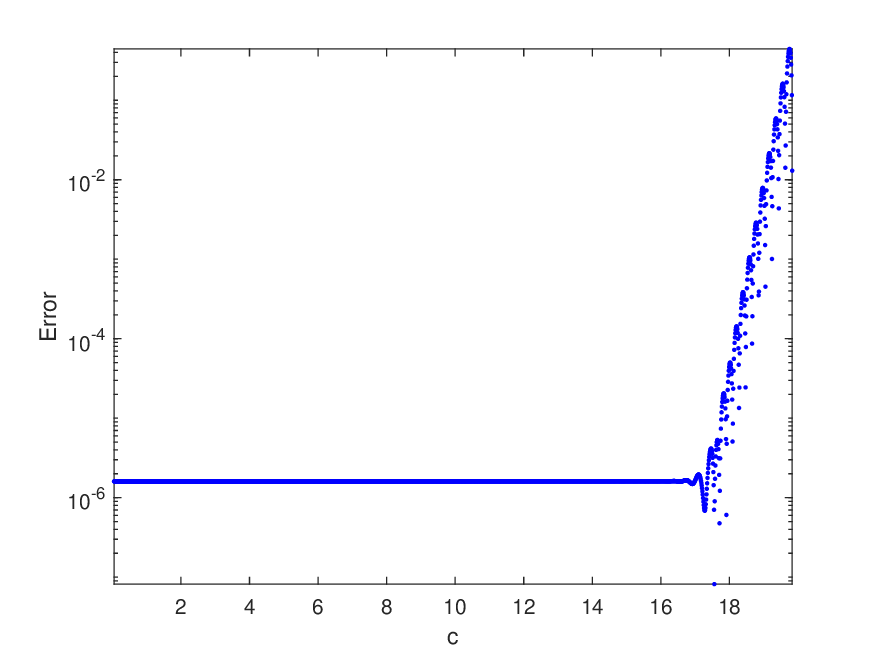}
	\includegraphics[width=0.49\linewidth]{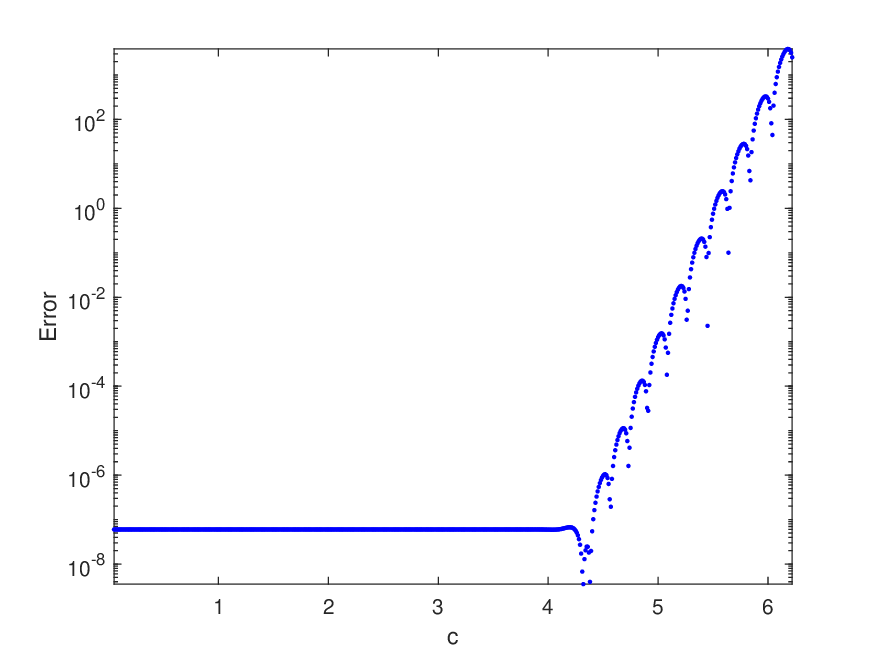}
	\caption{Difference between computations of $G_{\mathbb{T}}(\pi)$ in (\ref{G-1}) and (\ref{G-2}) for $\alpha = 2.5$ (left) and $\alpha =3.5$ (right) versus parameter $c$.}
	\label{pointwise error}
\end{figure}

Figure \ref{pointwise error} shows the difference of $G_{\mathbb{T}}(\pi)$ computed from (\ref{G-1}) and (\ref{G-2}) for $\alpha = 2.5$ (left) and $\alpha =3.5$ (right) in logarithmic scale versus parameter $c$. The Fourier series (\ref{G-1}) is truncated such that the remainder is 
of the size $\mathcal{O}(10^{-10})$. For the integral representation of $G_{\mathbb{T}}(\pi)$ in (\ref{G-2}), we numerically compute the Mittag--Leffler function $E_{\alpha}(-ct^{\alpha})$ on the half line; this task is accomplished by using the Matlab code provided in \cite{Matlab}, where the Mittag-Leffler functions are approximated with relative errors of 
the size $\mathcal{O}(10^{-15})$. As follows from Fig. \ref{pointwise error}, the difference between the two computations is constantly small if $c < c_{\alpha}$, when the integral representation (\ref{G-2}) converges absolutely, where $c_{\alpha = 2.5} \approx 18.8$
and $c_{\alpha = 3.5} \approx 5.2$. However, the accuracy of numerical computations based on the integral representation (\ref{G-2}) deteriorates 
for $c$ approaching $c_{\alpha}$ and as a result, the difference between two computations quickly grows for $c > c_{\alpha}$. 

\begin{figure}[htpb]
	\centering
	\includegraphics[width=0.6\linewidth]{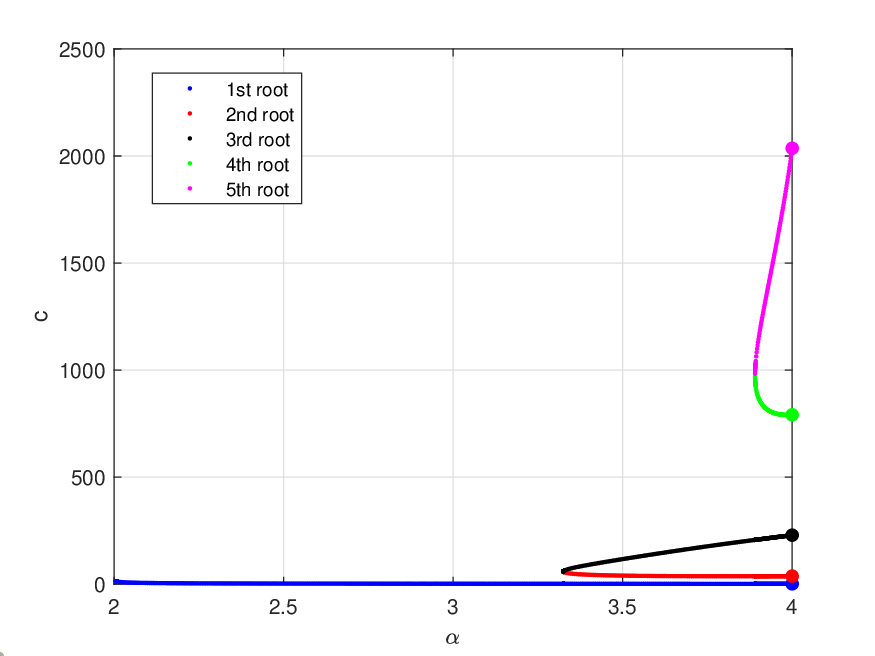} \\
	\includegraphics[width=0.7\linewidth]{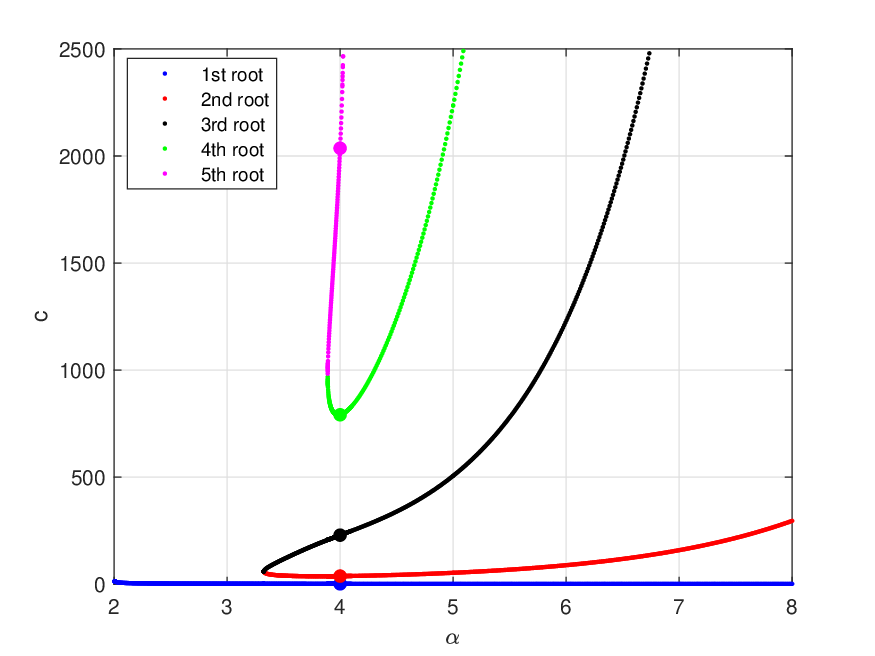}\\
	\includegraphics[width=0.49\linewidth]{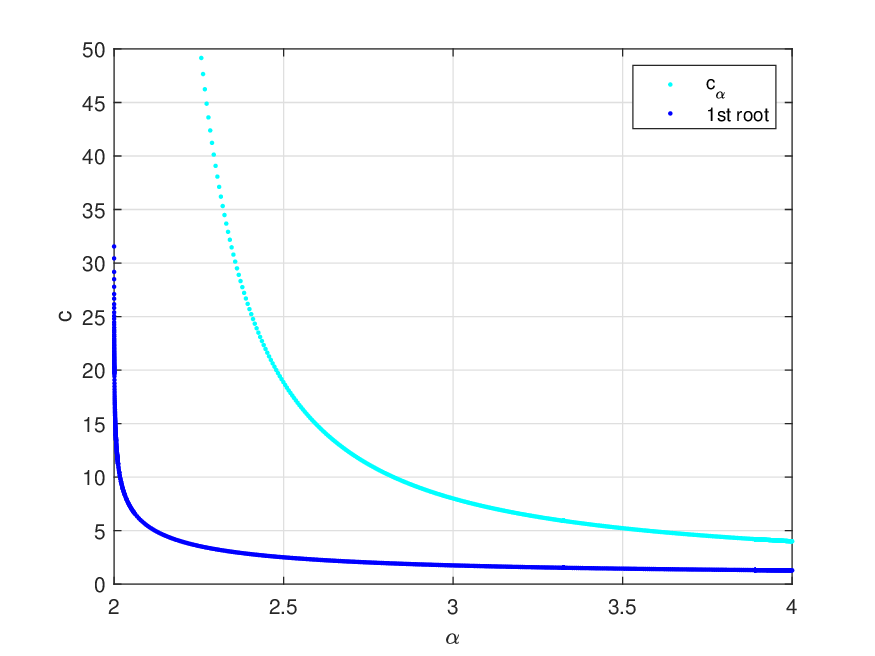}
	\includegraphics[width=0.49\linewidth]{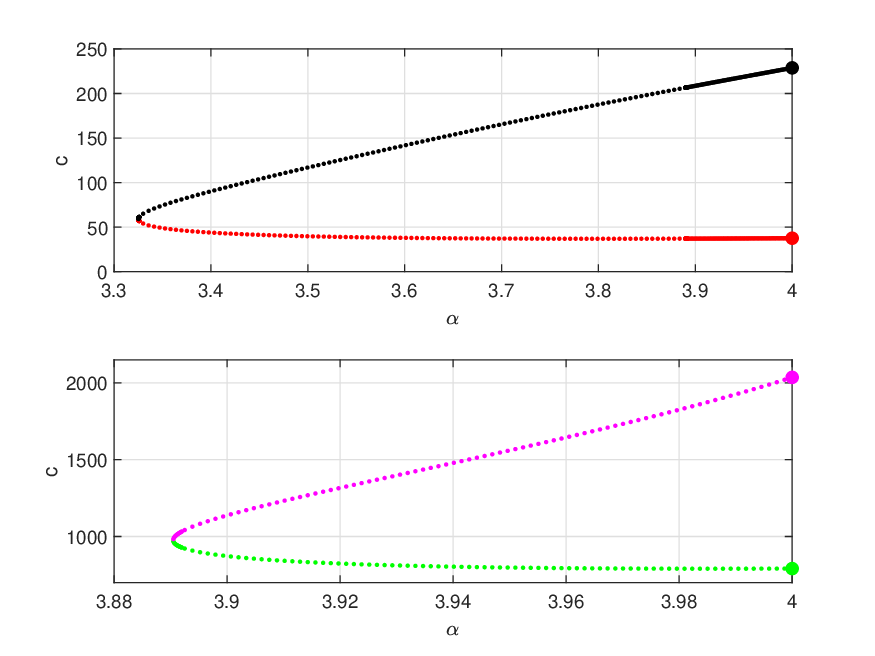}
	\caption{Top: Location of the first five roots of $G_{\mathbb{T}}(\pi)$ on the  $(c,\alpha)$ plane. Bottom: The first root of $G_{\mathbb{T}}(\pi)$ relative to the boundary $c_\alpha$ (left). Coalescence of the 2nd and 3rd roots (upper right) and the 4th and 5th roots (lower right).}
	\label{roots}
\end{figure}

Roots of $G_{\mathbb{T}}(\pi)$ in $c$ for each fixed $\alpha > 2$ are computed from the Fourier series representation (\ref{G-1}) using the bisection method. 
Figure \ref{roots} (top) shows the first five zeros of $G_{\mathbb{T}}(\pi)$ on the $(c,\alpha)$ plane, where the dots show the roots of $G_{\mathbb{T}}(\pi)$ computed from the exact solutions in Appendix \ref{appendix-b} for $\alpha = 4$. 
The first root exists for every $\alpha > 2$ and is located inside $(0,c_{\alpha})$, see the bottom left panel. The other roots are located outside $(0,c_\alpha)$ and disappear via pairwise coalescence as $\alpha$ is reduced towards $\alpha = 2$, see the bottom right panels. The 2nd and 3rd roots coalesce at $\alpha \approx 3.325$ and the 4th and 5th roots coalesce at $\alpha \approx 3.89$. The number of terms in the Fourier series of $G_{\mathbb{T}}(\pi)$ is increased to compute the 4th and 5th roots such that the remainder is of the size of $\mathcal{O}(10^{-14})$ because $G_{\mathbb{T}}(\pi)$ becomes very small near the location of these roots. 

Table \ref{roots-error} compares the error between the numerically detected roots at $\alpha = 4$ and the roots of $G_{\mathbb{T}}(\pi)$ obtained from solving the transcendental equation \eqref{transcendental-eq} in Appendix \ref{appendix-b}.

\begin{table}[ht]
	\centering
	\begin{tabular}{|c|c|}
		Root & Error \\
		\hline
		1st & 1.9915 e-11 \\
		2nd & 7.1495 e-08 \\
		3rd & 3.3182 e-06 \\
		4th & 0.0031\\
		5th & 0.0156 \\
	\end{tabular}
	\caption{Difference between locations of the first five roots of $G_{\mathbb{T}}(\pi)$
		for $\alpha =4$ computed from (\ref{G-1}) and (\ref{transcendental-eq}).}
	\label{roots-error}
\end{table}

Green's function $G_{\mathbb{T}}$ was computed versus $x$ using the Fourier series representation (\ref{green-fourier}) for fixed values of $(c,\alpha)$. 
The plots of $G_{\mathbb{T}}$ are shown in Figures \ref{profile} and \ref{profile_super}. 
Since the first root of $G_{\mathbb{T}}(\pi)$ occurs at $c\approx 2.507$ for $\alpha =2.5$ and at $c \approx 1.446$ for $\alpha =3.5$, see Fig. \ref{roots} (bottom left panel), the threshold $c_0$ in Conjecture \ref{conj-main} reduces with the larger value of $\alpha$. Appendix \ref{appendix-c} 
gives a formal asymptotic approximation of the threshold $c_0$ as $\alpha \to 2$.

\section{Conclusion}
\label{sec-conclusion}

The main contribution of this work is the novel relation between Green's function for the linear operator $c + (-\Delta)^{\alpha/2}$ on the periodic domain $\mathbb{T}$ and the Mittag--Leffler function. With the help of this relation, we have proved that Green's funciton is strictly positive on $\mathbb{T}$ and single-lobe (monotonically decreasing away from the maximum point) for every $c > 0$ and $\alpha \in (0,2]$. 
The same property is also true for sufficiently small $c$ and $\alpha \in (2,4]$ 
but we give numerical and asymptotic results that Green's function 
has a finite number of zeros on $\mathbb{T}$ for sufficiently large $c$, 
the number of zeros is bounded in the limit $c \to \infty$ for $\alpha \in (2,4)$ but is unbounded for $\alpha = 4$. Rigorous proof of properties 
of Green's function for $\alpha \in (2,4)$ is an open problem left for 
further studies.

\appendix
\section{Green's function $G_{\mathbb{T}}$ for $\alpha = 2$}
\label{appendix-a}

Here we derive the exact analytic form of Green's function $G_{\mathbb{T}}$ 
for $\alpha = 2$. The following proposition reproduces Theorem \ref{main} for $\alpha = 2$.

\begin{proposition}
	\label{prop-appendix-a}
	For every $c > 0$, Green's function $G_{\mathbb{T}}$ at $\alpha = 2$ is 
	even, strictly positive on $\mathbb{T}$, and strictly 
	monotonically decreasing on $(0,\pi)$. 	
\end{proposition}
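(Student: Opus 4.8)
The plan is to exploit the fact that at $\alpha = 2$ the fractional Laplacian reduces to the ordinary second derivative, $(-\Delta)^{\alpha/2} = -\partial_x^2$, so that the boundary-value problem \eqref{green} becomes the classical second-order ODE $c\,G_{\mathbb{T}} - G_{\mathbb{T}}'' = \delta$ on $\mathbb{T}$ with periodic boundary conditions. This problem is solvable in closed form, and all three assertions then follow by inspection of the explicit expression. I would therefore first derive that closed form and then read off evenness, positivity, and monotonicity directly.

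To obtain the closed form, I would solve the homogeneous equation $G_{\mathbb{T}}'' = c\,G_{\mathbb{T}}$ on $(0,\pi)$, whose general solution is a linear combination of $\cosh(\sqrt{c}\,x)$ and $\sinh(\sqrt{c}\,x)$. Three conditions pin down the constants. First, evenness of $G_{\mathbb{T}}$ (already clear from the cosine Fourier series \eqref{green-fourier}) together with the distributional jump condition obtained by integrating $c\,G_{\mathbb{T}} - G_{\mathbb{T}}'' = \delta$ across $x = 0$ forces $G_{\mathbb{T}}'(0^+) = -\tfrac{1}{2}$. Second, periodicity together with evenness forces $G_{\mathbb{T}}'(\pi) = 0$. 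Solving for the two constants and simplifying with the hyperbolic addition formula, I expect to arrive at
\[
G_{\mathbb{T}}(x) = \frac{\cosh\!\big(\sqrt{c}\,(\pi - |x|)\big)}{2\sqrt{c}\,\sinh(\sqrt{c}\,\pi)}, \qquad x \in \mathbb{T}.
\]

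With this formula in hand, the three claims are immediate. Evenness is manifest from the dependence on $|x|$. Strict positivity follows because $\cosh > 0$ everywhere and $\sinh(\sqrt{c}\,\pi) > 0$ for every $c > 0$; in particular $G_{\mathbb{T}}(\pi) = [2\sqrt{c}\,\sinh(\sqrt{c}\,\pi)]^{-1} > 0$, consistent with Proposition \ref{prop-3}. For the monotonicity, differentiating on $(0,\pi)$ gives $G_{\mathbb{T}}'(x) = -\sinh(\sqrt{c}\,(\pi - x))/[2\sinh(\sqrt{c}\,\pi)]$, which is strictly negative since $\pi - x \in (0,\pi)$ makes the numerator positive; hence $G_{\mathbb{T}}$ is strictly decreasing on $(0,\pi)$.

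I do not anticipate a serious obstacle here, as $\alpha = 2$ is the local, classical case. The only point requiring care is the correct bookkeeping of the delta-function jump condition and its interaction with the periodic boundary condition, i.e.\ establishing $G_{\mathbb{T}}'(0^+) = -\tfrac{1}{2}$ and $G_{\mathbb{T}}'(\pi) = 0$; these two matching conditions are precisely what fix the constants and hence the whole profile. As a consistency check one could verify that the closed form above reproduces the Fourier series \eqref{green-fourier}, which amounts to a standard summation identity for $\sum_{n\ge 1}\cos(nx)/(c+n^2)$.
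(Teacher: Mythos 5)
Your proposal is correct and follows essentially the same route as the paper's Appendix~\ref{appendix-a}: solve the second-order ODE in closed form and read off evenness, positivity, and monotonicity from the explicit expression, which matches (\ref{G2exact}) exactly. The only difference is that you fix the constants using $G_{\mathbb{T}}'(0^+)=-\tfrac12$ and $G_{\mathbb{T}}'(\pi)=0$ rather than the Fourier-series value of $G_{\mathbb{T}}(0)$, a variant the paper itself notes as equivalent in the remark following its proof.
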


\begin{proof}
For $\alpha=2$, Green's function $G_{\mathbb{T}}$ satisfies the second-order differential equation
\begin{equation}\label{alpha2-ode}
-G_{\mathbb{T}}''(x) + cG_{\mathbb{T}}(x) = \delta(x), \quad x \in \mathbb{T},
\end{equation}
where $c > 0$. It follows from the theory of Dirac delta distributions that $G_{\mathbb{T}}$ is continuous, even,  periodic on $\mathbb{T}$, and have a jump discontinuity of the first derivative at $x = 0$. 

To see the jump condition of $G_{\mathbb{T}}'(x)$ across $x=0$, we integrate \eqref{alpha2-ode} on $(-\varepsilon, \varepsilon)$ and then take the limit as $\varepsilon \to 0$.
\begin{equation}
\label{lim-der}
\lim\limits_{\varepsilon\to 0} \int_{-\varepsilon}^{\varepsilon}\left(-G_{\mathbb{T}}''(x) + cG_{\mathbb{T}}(x)\right)dx = 
\lim\limits_{\varepsilon\to 0} \int_{-\varepsilon}^{\varepsilon} \delta(x) dx = 1,
\end{equation}
where the last equality follows from properties of $\delta$.
Since $G_{\mathbb{T}} \in C^0(\mathbb{R})$, the second term on the left hand side vanishes as $\varepsilon\to 0$, which yields
$-G_{\mathbb{T}}'(0^+)+ G_{\mathbb{T}}'(0^-)= 1$. Since $G_{\mathbb{T}}$ is even on $\mathbb{R}$, we obtain \begin{equation}
\label{G-0-prime} 
G_{\mathbb{T}}'(0^+) = -\frac{1}{2}.
\end{equation} 
Additionally, it follows from the Fourier series representation (\ref{green-fourier}) with $\alpha = 2$ that 
\begin{equation}
\label{G-0}
G_{\mathbb{T}}(0)= \frac{1}{2\pi} \sum_{n\in \mathbb{Z}}\frac{1}{c+ n^2}= \frac{\coth(\sqrt{c}\pi)}{2\sqrt{c}},
\end{equation}
where we have used numerical series (4) in \cite[Section 5.1.25]{Prudnikov}. 

The differential equation 
(\ref{alpha2-ode}) is solved for even $G_{\mathbb{T}}$ as follows:
\begin{eqnarray*}
G_{\mathbb{T}}(x) = G_{\mathbb{T}}(0) \cosh(\sqrt{c} x) + G_{\mathbb{T}}'(0^+) \frac{\sinh(\sqrt{c}|x|)}{\sqrt{c}}, \quad x \in \mathbb{T}.
\end{eqnarray*}
Due to (\ref{G-0-prime}) and (\ref{G-0}), this can be rewritten in the closed form as 
\begin{eqnarray}
	G_{\mathbb{T}}(x) = \frac{\cosh(\sqrt{c}(\pi-|x|))}{2 \sqrt{c} \sinh(\sqrt{c} \pi)}, \quad x \in \mathbb{T}.
\label{G2exact}
\end{eqnarray}
It follows from (\ref{G2exact}) that 
\begin{eqnarray}
G_{\mathbb{T}}'(x) = -\frac{\sinh(\sqrt{c}(\pi-x))}{2\sinh(\sqrt{c} \pi)}<0, \quad x \in (0,\pi),
\label{G2der}
\end{eqnarray}
and hence $G_{\mathbb{T}}$ is strictly monotonically decreasing on $(0,\pi)$. 
On the other hand, 
\begin{eqnarray}
G_{\mathbb{T}}(\pi) = \frac{1}{2\sinh(\sqrt{c} \pi)} >0, \quad c > 0,
\label{G2value}
\end{eqnarray}
and hence $G_{\mathbb{T}}$ is strictly positive on $\mathbb{T}$. Note that the exact expression for $G_{\mathbb{T}}(\pi)$ in (\ref{G2value}) also follows from numerical series (6) in \cite[Section 5.1.25]{Prudnikov}.
\end{proof}

\begin{remark}
It follows from (\ref{G2exact}) that $G_{\mathbb{T}}'(\pi) = 0$, due to smoothness and periodicity of even $G_{\mathbb{T}}(x)$ across $x = \pm \pi$. Therefore, the exact expression in (\ref{G2exact}) and the relation for $G_{\mathbb{T}}(0)$ in (\ref{G-0}) can be alternatively found by solving the differential equation 
	(\ref{alpha2-ode}) for even $G_{\mathbb{T}}$ subject to the boundary conditions 
	$G_{\mathbb{T}}'(0^{\pm}) = \mp \frac{1}{2}$ and $G_{\mathbb{T}}'(\pm \pi) = 0$. 
\end{remark}

\section{Green's function $G_{\mathbb{T}}$ for $\alpha = 4$}
\label{appendix-b}

Here we derive the exact analytic form of Green's function $G_{\mathbb{T}}$ for $\alpha = 4$. The following proposition proves Conjecture \ref{conj-main} for $\alpha = 4$.

\begin{proposition}
	\label{prop-appendix-b}
	There exists $c_0 > 0$ such that for $c \in (0,c_0)$, Green's function $G_{\mathbb{T}}$ at $\alpha = 4$ is 
	even, strictly positive on $\mathbb{T}$, and strictly monotonically decreasing on $(0,\pi)$. For $c \in [c_0,\infty)$, $G_{\mathbb{T}}$ has a finite number of zeros on $\mathbb{T}$, which becomes unbounded as $c \to \infty$.
\end{proposition}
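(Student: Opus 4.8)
The plan is to mirror the explicit construction of Appendix~\ref{appendix-a}, replacing the second-order problem by the fourth-order one, and then to read off positivity, monotonicity, and the zero count from the resulting closed form. For $\alpha=4$ the operator $(-\Delta)^2$ acts as $\partial_x^4$, so Green's function solves $G_{\mathbb{T}}^{(4)}(x)+c\,G_{\mathbb{T}}(x)=\delta(x)$ on $\mathbb{T}$. As in Appendix~\ref{appendix-a}, I would first record that $G_{\mathbb{T}}$ is even and $C^2$ across $x=0$, with the distributional source producing a unit jump in the third derivative; combined with evenness this gives $G_{\mathbb{T}}'(0^+)=0$ and $G_{\mathbb{T}}'''(0^+)=\tfrac12$, while smoothness and $2\pi$-periodicity of the even extension force $G_{\mathbb{T}}'(\pi)=G_{\mathbb{T}}'''(\pi)=0$. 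The characteristic equation $\lambda^4=-c$ has roots $\pm\mu(1\pm i)$ with $\mu:=c^{1/4}/\sqrt2$, so on $(0,\pi)$ the solution is a combination of $e^{\pm\mu x}\cos(\mu x)$ and $e^{\pm\mu x}\sin(\mu x)$, consistent with $E_4(-x^4)=\cos(x/\sqrt2)\cosh(x/\sqrt2)$ from Remark~\ref{rem-explicit}. The four conditions determine the four coefficients, yielding an explicit formula for $G_{\mathbb{T}}(x)$; setting $G_{\mathbb{T}}(\pi)=0$ produces the transcendental equation \eqref{transcendental-eq}, whose roots in $c$ are exactly the numerically computed zeros of $G_{\mathbb{T}}(\pi)$.

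For the small-$c$ regime I would argue by perturbation off the bi-Laplacian limit. Writing $G_{\mathbb{T}}(x)=\frac{1}{2\pi c}+\widetilde G_{\mathbb{T}}(x)$, the constant term dominates as $c\to 0^+$, so strict positivity is immediate for small $c$; for monotonicity note $G_{\mathbb{T}}'=\widetilde G_{\mathbb{T}}'$, and $\widetilde G_{\mathbb{T}}\to G_0$ uniformly, where $G_0$ is the even periodic solution of $G_0^{(4)}=\delta-\frac{1}{2\pi}$ subject to the same conditions at $0$ and $\pi$. Integrating $G_0^{(4)}=-\frac{1}{2\pi}$ on $(0,\pi)$ with $G_0'''(0^+)=\tfrac12$, $G_0'''(\pi)=0$, $G_0'(0)=G_0'(\pi)=0$ gives the explicit cubic $G_0'(x)=-\frac{x(\pi-x)(2\pi-x)}{12\pi}$, which is strictly negative on $(0,\pi)$. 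Since $G_{\mathbb{T}}'(0^+)=G_{\mathbb{T}}'(\pi)=0$ exactly and $G_{\mathbb{T}}'\to G_0'$ in $C^0$ with matching quadratic vanishing at the endpoints, $G_{\mathbb{T}}'<0$ on $(0,\pi)$ persists for all sufficiently small $c$. I would then define $c_0$ as the supremum of thresholds below which the single-lobe positive profile is maintained, so $c_0>0$.

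For the large-$c$ regime I would use the periodization $G_{\mathbb{T}}(x)=\sum_{n\in\mathbb{Z}}G_{\mathbb{R}}(x-2\pi n)$ together with the explicit real-line kernel $G_{\mathbb{R}}(x)=\frac{1}{8\mu^3}e^{-\mu|x|}\big(\cos(\mu x)+\sin(\mu|x|)\big)$, which solves the same fourth-order equation on $\mathbb{R}$ and vanishes precisely at $\mu x=\tfrac{3\pi}{4}+k\pi$. On $(0,\pi)$ this places $\sim\mu=c^{1/4}/\sqrt2$ transversal sign changes of the $n=0$ term, each with crossing slope of order $\mu^{-2}e^{-\mu x}$, while the remaining images contribute at most $O(e^{-2\pi\mu})$ in the bulk; hence for large $c$ each interior crossing of $G_{\mathbb{R}}$ survives the periodic correction and $G_{\mathbb{T}}$ acquires a number of zeros that grows without bound. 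Finiteness of the zero set for each fixed $c\ge c_0$ follows because $G_{\mathbb{T}}$ is given on $(0,\pi)$ by a nontrivial analytic combination of $e^{\pm\mu x}\cos(\mu x)$ and $e^{\pm\mu x}\sin(\mu x)$, which has only finitely many zeros on the compact interval.

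The main obstacle is the clean dichotomy at the single threshold $c_0$: the small-$c$ perturbation and the large-$c$ oscillation arguments are essentially independent, and one must still show that single-lobe positivity holds for \emph{every} $c\in(0,c_0)$ and fails for \emph{every} $c\ge c_0$, rather than merely for small and for large $c$. The delicate point is the boundary-layer competition near $x=\pi$, where the $n=0$ and $n=\pm1$ images of $G_{\mathbb{R}}$ are of the same exponential order $e^{-\mu\pi}$, so that the sign of $G_{\mathbb{T}}(\pi)$ is governed by the full transcendental equation \eqref{transcendental-eq} rather than by a single dominant term. I expect to control this by showing that, as long as $G_{\mathbb{T}}(\pi)>0$, the explicit formula forces $G_{\mathbb{T}}'<0$ throughout $(0,\pi)$, so that $x=\pi$ is the global minimum and the first loss of positivity coincides with the first root of $G_{\mathbb{T}}(\pi)$; this both identifies $c_0$ with that first root and yields the stated structure for $c\ge c_0$.
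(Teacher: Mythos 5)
Your proposal is correct in outline and follows the same skeleton as the paper's proof: the same fourth-order boundary-value problem with the boundary conditions (\ref{bc-Green}), the same four-dimensional solution space (your $\mu=c^{1/4}/\sqrt2$ is the paper's $a$ with $c=4a^4$), the same transcendental equation (\ref{transcendental-eq}) for the zeros of $G_{\mathbb{T}}(\pi)$, and for large $c$ the same leading-order kernel $\frac{1}{8a^3}e^{-ax}(\cos(ax)+\sin(ax))$ as in (\ref{G-simplified}). The genuine difference is in the small-$c$ step: you perturb off the $c\to0$ limit and compute the explicit cubic $G_0'(x)=-\frac{x(\pi-x)(2\pi-x)}{12\pi}<0$, whereas the paper works directly with the exact formula (\ref{G4exact-der}) and reduces monotonicity to the inequality (\ref{graph-eq}), proved via monotonicity of $t\mapsto\sin t/\sinh t$, which yields the concrete range $a\in(0,\tfrac12)$, i.e.\ $c\in(0,\tfrac14)$. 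Your variant is fine but needs one repair: since $G_0'$ vanishes only to first order at $x=0$ and $x=\pi$ (its second derivative there is $-\pi/6$ and $\pi/12$ respectively), $C^0$ convergence of $G_{\mathbb{T}}'$ to $G_0'$ does not control the sign near the endpoints; you need $C^1$ (or $C^2$) convergence, which does hold here because the Fourier coefficients of $G_{\mathbb{T}}'-G_0'$ are $O(c/n^7)$, but it must be invoked. Finally, you correctly identify the real gap — the clean dichotomy at a single threshold $c_0$ and the claim that positivity fails for \emph{every} $c\ge c_0$ — and you should know that the paper does not close it either: its proof only establishes the single-lobe property for $a\in(0,\tfrac12)$ and non-positivity/oscillation for $a$ past the first root $a_1$ and for $a$ large, with the intermediate monotone structure of the zero set supported only numerically (see the remark following the proof in Appendix \ref{appendix-b}). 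So your honest flagging of that obstacle matches the actual state of the argument rather than indicating a deficiency relative to it.
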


\begin{proof}
For $\alpha=4$, Green's function $G_{\mathbb{T}}$ satisfies the fourth-order differential equation
\begin{equation}\label{alpha4-ode}
G_{\mathbb{T}}''''(x) + cG_{\mathbb{T}}(x) = \delta(x), \quad x \in \mathbb{T},
\end{equation}
where $c > 0$. It follows from the theory of Dirac delta distributions that $G_{\mathbb{T}}$ is continuous, even,  periodic on $\mathbb{T}$, and have a jump discontinuity of the third derivative at $x = 0$. Similarly to the computation in (\ref{lim-der}), it follows that Green's function 
solves the boundary-value problem with the boundary conditions 
\begin{equation}
\label{bc-Green}
G_{\mathbb{T}}'(0) = G_{\mathbb{T}}'(\pm \pi) = G_{\mathbb{T}}'''(\pm \pi) = 0, \quad G_{\mathbb{T}}'''(0^{\pm}) = \pm \frac{1}{2}.
\end{equation}
Due to the boundary conditions (\ref{bc-Green}), it is easier to solve 
the differential equation (\ref{alpha4-ode}) for $G_{\mathbb{T}}'$ on $[0,\pi]$. By using the parametrization $c = 4 a^4$, we obtain
\begin{eqnarray*}
G_{\mathbb{T}}'(x) &=& c_1 \cosh(ax) \cos(ax) + c_2 \cosh(ax) \sin(ax) \\
&& 
+ c_3 \sinh(ax) \cos(ax) + c_4 \sinh(ax) \sin(ax), \quad x \in [0,\pi],
\end{eqnarray*}
where $c_1$, $c_2$, $c_3$, and $c_4$ are some coefficients. 
We can find $c_1 = 0$ and $c_4 = \frac{1}{4a^2}$ from the two boundary conditions (\ref{bc-Green}) at $x = 0^+$. The other two boundary conditions (\ref{bc-Green}) at $x = \pi$ gives the linear system for $c_2$ and $c_3$:
\begin{eqnarray*}
	\begin{bmatrix} \cosh(\pi a) \sin(\pi a) & \sinh(\pi a) \cos(\pi a) \\
		\sinh(\pi a) \cos(\pi a) & -\cosh(\pi a) \sin(\pi a)
	 \end{bmatrix}
 \begin{bmatrix} c_2 \\ c_3 \end{bmatrix} = 
 -c_4 \begin{bmatrix} \sinh(\pi a) \sin(\pi a) \\ \cosh(\pi a) \cos(\pi a) \end{bmatrix}.
\end{eqnarray*}
By Cramer's rule, we find the unique solution 
\begin{eqnarray*}
c_2 = -c_4 \frac{\sinh(2\pi a)}{\cosh(2\pi a) - \cos(2\pi a)}, \quad 
c_3 = c_4 \frac{\sin(2\pi a)}{\cosh(2\pi a) - \cos(2\pi a)},
\end{eqnarray*}
which results in the exact analytical expression
\begin{equation}
\label{G4exact-der}
G_{\mathbb{T}}'(x) = \frac{1}{4a^2} \frac{\sinh(ax) \sin a(2 \pi - x) - \sin(ax) 
	\sinh a(2\pi-x)}{\cosh(2\pi a) - \cos(2\pi a)}, \quad x \in [0,\pi].
\end{equation}
Integrating (\ref{G4exact-der}) in $x$ yields the exact analytical expression for $G_{\mathbb{T}}$:
\begin{equation}
\label{G4exact}
G_{\mathbb{T}}(x) = \frac{1}{8 a^3} 
\frac{g(x)}{\cosh(2\pi a) - \cos(2\pi a)}, \quad x \in [0,\pi],
\end{equation}
where 
\begin{eqnarray*}
g(x) &:=& \sinh(ax) \cos a(2 \pi - x) + \cosh(ax) \sin a(2\pi - x) \\
&& + \sin(ax) 
	\cosh a(2\pi-x) + \cos(ax) \sinh a(2\pi - x)
\end{eqnarray*}
and the constant of integration is set to zero due to the differential 
equation (\ref{alpha4-ode}).

We verify the validity of the exact solution (\ref{G4exact}) 
by comparing $G_{\mathbb{T}}(0)$ and $G_{\mathbb{T}}(\pi)$ with the Fourier series representation (\ref{green-fourier}) for $\alpha = 4$:
\begin{equation}
\label{G-4}
G_{\mathbb{T}}(0) = \frac{1}{2\pi} \sum_{n\in \mathbb{Z}}\frac{1}{4 a^4 + n^2}= \frac{1}{8a^3} \frac{\sinh(2\pi a) + \sin(2\pi a)}{\cosh(2\pi a) - \cos(2\pi a)}
\end{equation}
and
\begin{eqnarray}
G_{\mathbb{T}}(\pi) = \frac{1}{2\pi} \sum_{n\in \mathbb{Z}}\frac{(-1)^n}{4 a^4 + n^2}= \frac{1}{4 a^3} \frac{\sinh(\pi a) \cos(\pi a) + \sin(\pi a) \cosh(\pi a) }{\cosh(2\pi a) - \cos(2\pi a)}.
\label{G4value}
\end{eqnarray}
Indeed, the exact expressions coincide with those found from the numerical series (1) and (2) in \cite[Section 5.1.27]{Prudnikov}. 

It follows from (\ref{G4value}) that $G_{\mathbb{T}}(\pi)$ vanishes for $c = 4 a^4 > 0$ if and only if $a > 0$ is a solution of the transcendental equation 
\begin{equation}
\label{transcendental-eq}
\tanh(\pi a) + \tan(\pi a) = 0.
\end{equation}
Elementary graphical analysis on Figure \ref{transcend-roots} 
shows that there exist a countable sequence of zeros $\{ a_n \}_{n \in \mathbb{N}}$ such that 
$a_n \in \left(n - \frac{1}{4},n \right)$, $n \in \mathbb{N}$. Hence, $G_{\mathbb{T}}$ is not positive for $a \in (a_1,\infty)$.

\begin{figure}[h]
	\centering
	\caption{Countable sequence of zeros $\{ a_n \}_{n \in \mathbb{N}}$ of \ref{transcendental-eq}}
	\label{transcend-roots}
\end{figure}

Let us now show that the profile of $G_{\mathbb{T}}$ is strictly, monotonically decreasing on $(0,\pi)$ for small $a$. It follows from (\ref{G4exact-der}) that $G_{\mathbb{T}}'(x) < 0$ for $x \in (0,\pi)$ if and only if 
\begin{equation}
\label{graph-eq}
\frac{\sin(ax)}{\sinh(ax)} > \frac{\sin a(2\pi - x)}{\sinh a(2\pi -x)}, \qquad x \in (0,\pi).
\end{equation}
The function 
$$
x \mapsto \frac{\sin(ax)}{\sinh(ax)}
$$ 
is monotonically decreasing on $[0,2\pi]$ as long as 
\begin{equation}
\label{graph-eq-der}
\cos(ax) \sinh(ax) - \sin(ax) \cosh(ax) \leq 0, \qquad x \in [0,2\pi],
\end{equation}
which is true at least for $a \in (0,\frac{1}{2})$. 
Hence, $G_{\mathbb{T}}$ is strictly motonically decreasing on $(0,\pi)$ with $G_{\mathbb{T}}(\pi) > 0$ for $a \in (0,a_0)$, where $a_0 \in (\frac{1}{2},1)$.
On the other hand, it is obvious that there exists $a_* \in (1,\frac{3}{2})$ such that 
the inequality (\ref{graph-eq-der}) [and hence the inequality (\ref{graph-eq})] is violated at $x = \pi$ for $a \in (a_*,2)$, for which $G_{\mathbb{T}}'(x) > 0$ at least near $x = \pi$.  

The first part of the proposition is proven due to the relation $c = 4 a^4$. It remains to prove that $G_{\mathbb{T}}$ has a finite number of zeros on $\mathbb{T}$ for fixed $a \in [a_0,\infty)$ which becomes unbounded as $a \to \infty$. 
To do so, we simplify the expression (\ref{G4exact}) for $G_{\mathbb{T}}$ in the asymptotic limit of large $a$ for every fixed $x \in (0,\pi)$:
\begin{equation}
\label{G-simplified}
G_{\mathbb{T}}(x) = \frac{1}{8 a^3} \left[ e^{-ax} \cos(ax) + e^{-ax} \sin(ax) 
+ \mathcal{O}(e^{-a(2 \pi - x)})\right] \quad \mbox{\rm as} \quad a \to \infty.
\end{equation}
Thus, as $a$ gets large, there are finitely many zeros of $G_{\mathbb{T}}$ on $(0,\pi)$ but the number of zeros of $G_{\mathbb{T}}$ grows unbounded as $a \to \infty$.
\end{proof}

\begin{remark}
	The leading-order term in the asymptotic expansion (\ref{G-simplified}) represents Green's function $G_{\mathbb{R}}$. The proof of Conjecture \ref{conj-line} for $\alpha = 4$ follows from this explicit expression.
\end{remark}

\begin{remark}
Figure \ref{ineq-b8} shows boundaries on the $(a,x)$ plane between 
positive (yellow) and negative (blue) values of $G_{\mathbb{T}}$ (left) and $G_{\mathbb{T}}'$ (right). It follows from the figure that the zeros of $G_{\mathbb{T}}$ and $G_{\mathbb{T}}'$ are monotonically decreasing with respect to parameter $a$ and the number of zeros only grows as $a$ increases. In other words, zeros of $G_{\mathbb{T}}$ cannot coalesce and disappear. We were not able to prove these properties for every $a > 0$ inside $(0,\pi)$ (however, the proof can be given at $x = \pi$).
\end{remark}

\begin{figure}[h]
	\centering
	\includegraphics[width=0.49\linewidth]{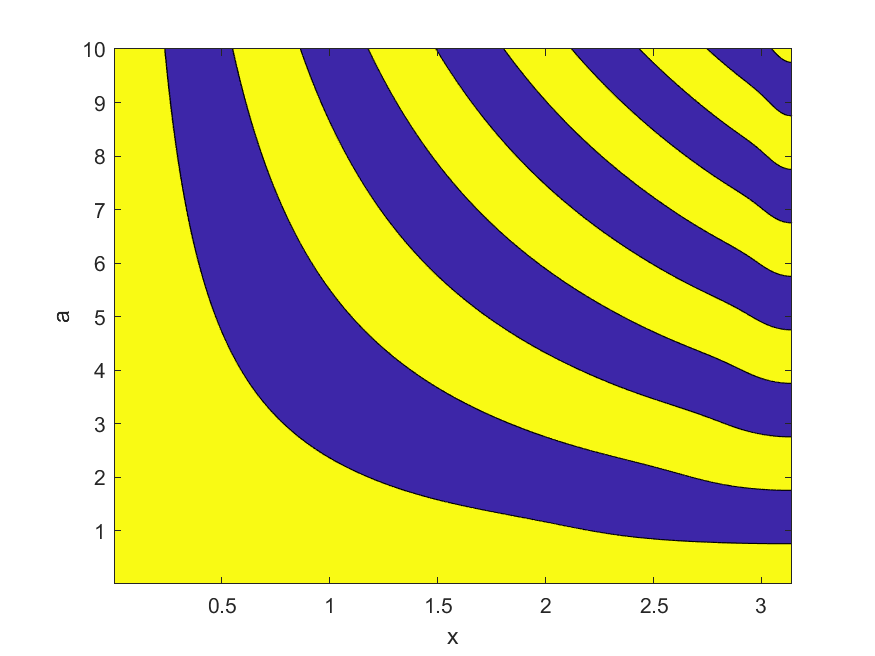}
	\includegraphics[width=0.49\linewidth]{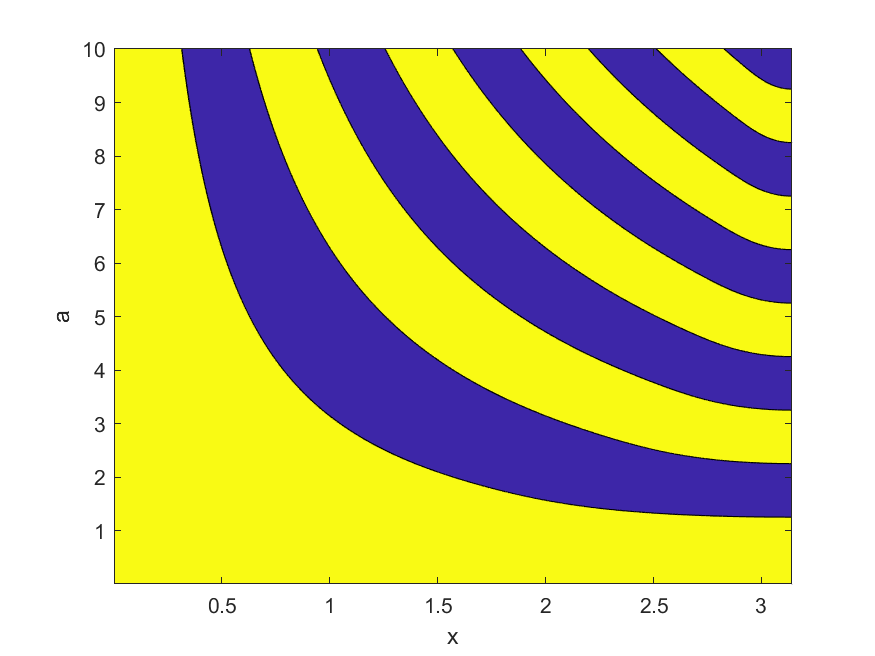}
	\caption{Left: areas on $(a,x)$ plane where $G_{\mathbb{T}}$ is positive (yellow) 
		and negative (blue). Right: the same but for $G_{\mathbb{T}}'$.}
	\label{ineq-b8}
\end{figure}

\section{Asymptotic approximation of the first zero of $G_{\mathbb{T}}(\pi)$}
\label{appendix-c}

Here, we obtain the formal asymptotic dependence of the first zero of $G_{\mathbb{T}}(\pi)$ as $(c,\alpha) \to (\infty,2)$. We use the integral 
representation (\ref{G-2}). Replacing the Mittag--Leffler function $E_{\alpha}$ by its leading-order asymptotic expression (\ref{mfl-asymp-2}) for $2 \leq \alpha < 6$, we obtain 
formally 
\begin{equation}
\label{G-3}
G_{\mathbb{T}}(\pi) = \frac{1}{2 \pi \alpha c } \left[  I(a,b) + \mbox{\rm error terms} \right],
\end{equation}
where 
\begin{equation}
\label{int-a-b}
I(a,b) := \int_0^{\infty} {\rm sech}^2\left(\frac{t}{2}\right) 
e^{a t} \cos(bt) dt
\end{equation}
with 
\begin{equation}
\label{ab-c}
a := c^{\frac{1}{\alpha}} \cos\left(\frac{\pi}{\alpha}\right), \quad 
b := c^{\frac{1}{\alpha}} \sin\left(\frac{\pi}{\alpha}\right).
\end{equation}
The limit $(c,\alpha) \to (\infty,2)$ such that $c < c_{\alpha}$ 
corresponds to the limit $b \to \infty$ with $a < 1$.

The integral $I(a,b)$ is the rapidly oscillating integral in the limit $b \to \infty$. We split it into two parts:
\begin{eqnarray}
\nonumber
I(a,b) &=& I_1(a,b) + I_2(a,b) \qquad \qquad\\
&=& \int_0^{\infty} {\rm sech}^2\left(\frac{t}{2}\right) \cosh(at) \cos(bt) dt + 
\int_0^{\infty} {\rm sech}^2\left(\frac{t}{2}\right) \sinh(at) \cos(bt) dt,
\label{int-split}
\end{eqnarray}
where $I_1(a,b)$ is exponentially small in $b$ and $I_2(a,b)$ is algebraically small in $b$. Indeed, by Darboux principle \cite{Boyd}, we evaluate 
the first integral for $a < 1$ with the residue theorem:
\begin{eqnarray*}
	I_1(a,b) && = \frac{1}{2} {\rm Re} \int_{-\infty}^{\infty} {\rm sech^2}\left(\frac{t}{2}\right) \cosh(at) e^{ibt} dt \qquad \qquad  \qquad \qquad   \qquad \qquad \\
	&& = {\rm Re} 4 \pi i {\rm Res}_{z = \pi i} \left[ \frac{e^z \cosh(az) e^{ibz}}{(1+e^z)^2} \right] + \mathcal{O}(e^{-3 \pi b}) \\
	\qquad \qquad 	&& = 4 \pi \left[ b \cos(\pi a) + a \sin(\pi a) \right] e^{-\pi b} + \mathcal{O}(e^{-3 \pi b}).
\end{eqnarray*}
Integrating the second integral by parts several times for $a < 1$ (see Section 5.2 in \cite{Olver}), we obtain 
\begin{eqnarray*}
	I_2(a,b) && = \frac{1}{b} \sin(bt) \sinh(at) {\rm sech}^2\left(\frac{t}{2}\right) \biggr|_{t=0}^{t \to \infty} - \frac{1}{b} \int_{0}^{\infty} \sin(bt) \frac{d}{dt} \left[ {\rm sech}^2\left(\frac{t}{2}\right) \sinh(at)\right] dt \qquad \qquad  \qquad \qquad   \qquad \qquad \\
	&& = \frac{\cos(bt)}{b^2} \frac{d}{dt} \left[ {\rm sech}^2\left(\frac{t}{2}\right) \sinh(at)\right] \biggr|_{t=0}^{t \to \infty} - \frac{1}{b^2} \int_{0}^{\infty} \cos(bt) \frac{d^2}{dt^2} \left[ {\rm sech}^2\left(\frac{t}{2}\right) \sinh(at)\right] dt \\
	\qquad \qquad 	&& = -\frac{a}{b^2} + \mathcal{O}\left(\frac{a}{b^4}\right).
\end{eqnarray*}
Finding zero of $I(a,b)$ in $a$ as $b \to \infty$ yields the approximation 
\begin{equation}
\label{ab-asympt}
a = 4\pi b^3 e^{-\pi b} \left[ 1 + \mathcal{O}\left(\frac{1}{b^2}\right) \right].
\end{equation}
Substituting (\ref{ab-c}) into (\ref{ab-asympt}) then taking the logarithm of both sides yields the following transcendental equation for $c^{\frac{1}{\alpha}}$
\begin{equation}
2\ln\left(c^{\frac{1}{\alpha}}\right) - \pi\sin\left(\frac{\pi}{\alpha}\right)c^{\frac{1}{\alpha}} + \mathcal{O}(c^{-\frac{2}{\alpha}}) = \ln\left(\frac{\cos(\frac{\pi}{\alpha})}{4 \pi \sin^{3}\left(\frac{\pi}{\alpha}\right)}\right).
\end{equation}
In order to determine the dependence of $c$ in terms of $\alpha$ we first factor $c^{\frac{1}{\alpha}}$ on the left hand side, then expand around $\alpha =2 $ to obtain
\begin{equation*}
- \pi c^{\frac{1}{\alpha}}\left[ 1 + \mathcal{O}\left((\alpha-2)^2\right) +\mathcal{O}\left( \frac{\ln\left(c^{\frac{1}{\alpha}} \right)}{c^{\frac{1}{\alpha}}}\right) \right]= \ln\left[\frac{\alpha-2}{16} + \mathcal{O}\left((\alpha-2)^2\right)\right].
\end{equation*}
Since $c^{\frac{1}{\alpha}}$ is of order $\mathcal{O}(\ln(\alpha -2))$, the above equation becomes
\begin{equation*}
-\pi c^{\frac{1}{\alpha}} = \ln\left(\frac{\alpha-2}{16}\right) \left[ 1 + 
\mathcal{O}\left(\frac{\ln|\ln(\alpha - 2)|}{|\ln(\alpha - 2)|} \right) \right],
\end{equation*}
which, as $\alpha \to 2$, implies
\begin{equation}\label{c-vs-alpha}
c = \frac{1}{\pi^2} \ln^{2}\left(\frac{\alpha-2}{16}\right) \left[1+\mathcal{O}\left(\frac{\ln|\ln(\alpha-2)|}{|\ln(\alpha-2)|}\right) \right].
\end{equation}
The asymptotic approximation (\ref{c-vs-alpha}) suggests that the first zero of $G_{\mathbb{T}}(\pi)$  
at $c = c_0(\alpha)$ satisfies $c_0(\alpha) \to \infty$ as $\alpha \to 2$.
However, we note that the asymptotic approximation of the root of $I(a,b)$ given by 
(\ref{c-vs-alpha}) is derived without analysis of the error terms 
in (\ref{G-3}).

\end{document}